\newcommand*{\m}[1]{\underline{#1}}
\DeclareMathOperator{\Cl}{\mathcal{C}\ell}
\newcommand*{\BAR}[1]{\overline{#1}}
\newcommand{\R}{\mathbb R}
\newcommand{\HH}{\mathbb H}
\newcommand{\OO}{\mathbb O}
\newcommand{\cro}{D_x}
\newcommand{\crovec}{D_{\m{x}}}
\newcommand{\parder}[2]{\partial_{x_#1}f_#2}
\newtheorem{thm}{Theorem}[section]
\newtheorem{cor}[thm]{Corollary}
\newtheorem{lem}[thm]{Lemma}
\newtheorem{prop}[thm]{Proposition}
\theoremstyle{definition}
\newtheorem{rem}[thm]{Remark}
\title{On Structure of Octonion Regular Functions}
\author{Janne Kauhanen\\
Mathematics\\
Tampere University\\
FI-33014 Tampere University\\
Finland\\
\texttt{janne.kauhanen@tuni.fi}
\and
Heikki Orelma\\
Civil Engineering\\
Tampere University\\
FI-33014 Tampere University\\
Finland\\
\texttt{heikki.orelma@tuni.fi}
}
\begin{document}
\maketitle

\begin{abstract}
In this paper we study octonion regular functions and the structural differences between regular functions in octonion, quaternion, and Clifford analyses.
\end{abstract}

\textbf{Mathematics Subject Classification (2010)}. 30G35, 15A63

\textbf{Keywords}. Octonions, Cauchy-Riemann operators, regular functions

\section{Introduction}
In our recent papers \cite{KO1,KO2}, we started to study octonion algebraic methods in analysis. This paper is a continuation of our studies in this fascinating field. Over  the years, many results of octonion analysis have been published and studied since the fundamental paper of Dentoni and Sce \cite{DS}. One thing which has remained unclear to us is that what is octonion analysis all about? A consensus has been that octonion, quaternion and Clifford analyses are similar from a theoretical point of view, and maybe for this reason octonion analysis has been left to less attention. Our aim is to prove that octonion analysis and Clifford analysis are
 different theories from the point of view of regular functions. Thus, octonion analysis is a completely independent research topic.

We start by recalling preliminaries of octonions and Clifford numbers and their connections via triality. We define our fundamental function classes, i.e., left-, right- and bi-regular functions. We give chararacterizations for function classes in biaxial quaternion analysis and in Clifford analysis. The classical Riesz system of Stein and Weiss is used as a familiar reference to clearly see the differences.

The topic of this paper is highly technical, but we have tried to write everything in as a simple way as possible. Hopefully we have succeeded in this job. Many questions remain open and the reader may find a lot of open research problems between the lines. Hopefully we can answer some of these questions when the saga continues.

\section{Preliminaries: Octonion and Clifford algebras}
In this algebraic part of the paper, we first recall briefly the basic definitions and notations related to the octonion and Clifford algebras. Then we study their connections in detail. In the whole paper, our principle is to consider the standard orthonormal basis $\{ e_0,e_1,\ldots,e_7\}$ for $\mathbb{R}^8$, and by defining different products between the elements, we obtain different algebras. We will denote the octonion product by $e_i\circ e_j$, and the Clifford product by $e_ie_j$. 

\subsection{Octonions}
The algebra of octonions $\OO$ is the non-commutative and non-associative $8$-dimensional algebra with the basis $\{1,e_1,\ldots,e_7\}$ and the multiplication given by the following table.
\begin{center}
\begin{tabular}{c|cccccccc}
$\circ$ & $1$   & $e_1$  & $e_2$  & $e_3$  & $e_4$  & $e_5$  & $e_6$  & $e_7$\\
\hline
$1$     & $1$   & $e_1$  & $e_2$  & $e_3$  & $e_4$  & $e_5$  & $e_6$  & $e_7$\\
$e_1$   & $e_1$ & $-1$ & $e_3$  & $-e_2$ & $e_5$  & $-e_4$ & $-e_7$ & $e_6$\\
$e_2$   & $e_2$ & $-e_3$ & $-1$ & $e_1$  & $e_6$  & $e_7$  & $-e_4$ & $-e_5$\\
$e_3$   & $e_3$ & $e_2$  & $-e_1$ & $-1$ & $e_7$  & $-e_6$ & $e_5$  & $-e_4$\\
$e_4$   & $e_4$ & $-e_5$ & $-e_6$ & $-e_7$ & $-1$   & $e_1$  & $e_2$  & $e_3$\\
$e_5$   & $e_5$ & $e_4$  & $-e_7$ & $e_6$  & $-e_1$ & $-1$ & $-e_3$ & $e_2$\\
$e_6$   & $e_6$ & $e_7$ & $e_4$  & $-e_5$ & $-e_2$ & $e_3$ & $-1$  & $-e_1$\\
$e_7$   & $e_7$ & $-e_6$ & $e_5$  & $e_4$ & $-e_3$ & $-e_2$ & $e_1$  & $-1$
\end{tabular}
\end{center}
Let us point out that there are $480$ possible ways to define an octonion product such that $e_0=1$. Our choise is historically maybe the most used and traditional, and for this reason we may call it the canonical one, but e.g. Lounesto uses a different definition for octonion multiplication in his famous book \cite{L}.

For $1\le i,j\le 7$ we have
\begin{equation*}%\label{eq:multabbase}
e_i\circ e_i=e_i^2=-1,\qquad\text{and}\qquad
e_i\circ e_j=-e_j\circ e_i\quad\text{if }i\ne j.
\end{equation*}
An element $x\in\mathbb{O}$ may be represented in the forms
\begin{align*}
x&=x_0+x_1e_1+x_2e_2+x_3e_3+x_4e_4+x_5e_5+x_6e_6+x_7e_7\\
&=x_0+\m{x}\\
&=(x_0+x_1e_1+x_2e_2+x_3e_3)+(x_4+x_5e_1+x_6e_2+x_7e_3)\circ e_4\\
&=u+v\circ e_4=(u_0+\m{u})+(v_0+\m{v})\circ e_4.
\end{align*}
Here, $x_0,...,x_7\in\mathbb{R}$, $x_0$ is the \emph{real part}, \m{x} is the \emph{vector part}, and $u$ and $v\in\HH$ are quaternions. The last form is called the \emph{quaternion form} of an octonion. The \emph{conjugate} of $x$ is denoted and defined by $\BAR{x}=x_0-\m{x}$.
We see that the element $e_4$ plays a kind of a role of the ''imaginary unit''.
The product of two octonions can be written as
%(see, e.g., \cite{LiPeng} and \cite{Peng})
\begin{align}
x\circ y&=(x_0+\m{x})\circ(y_0+\m{y})=\sum_{i,j=0}^7x_iy_je_i\circ e_j
=\sum_{i=0}^7x_iy_ie_i^2+\mathop{\sum_{i,j=0}^7}_{i\ne j}x_iy_je_i\circ e_j\nonumber\\
&=x_0y_0-\sum_{i=1}^7x_iy_i+x_0\sum_{i=1}^7y_ie_i+y_0\sum_{i=1}^7x_ie_i
+\mathop{\sum_{i,j=1}^7}_{i\ne j}x_iy_je_i\circ e_j\nonumber\\
&=x_0y_0-\m{x}\cdot\m{y}
+x_0\m{y}+y_0\m{x}+\m{x}\times\m{y},\label{eq:octproddecomp}
\end{align}
where $\m{x}\cdot\m{y}$ is the \emph{dot product} and $\m{x}\times\m{y}$ the \emph{cross product} of vectors $\m{x}$ and $\m{y}$.

Denote the quaternion forms of octonions $x$ and $y$ by
\begin{equation}\label{eq:quarepxy}
\begin{aligned}
x&=(u_0+\m{u})+(v_0+\m{v})\circ e_4,\\
y&=(a_0+\m{a})+(b_0+\m{b})\circ e_4.
\end{aligned}
\end{equation}
In Lemma \ref{lem:crossquaform} we will return the cross product $\m{x}\times\m{y}$ of octonion vector parts $\m{x}$ and $\m{y}$ to cross products of the vector parts $\m{u}$, $\m{v}$, $\m{a}$, and $\m{b}$ of quaternions, which are classical $3$-dimensional cross products (see, e.g., \cite{GurlebeckSprozig,L,P}).

\begin{lem}[{See, e.g., \cite[Lemma 2.10]{KO1}}]\label{lem:quaoctprod}
Let $u,v\in\mathbb{H}$. Then
\begin{align*}
e_4\circ u&=\overline{u}\circ e_4,\\
e_4\circ (u\circ e_4)&=-\overline{u},\\
(u\circ e_4)\circ e_4&=-u,\\
u\circ (v\circ e_4)&=(v\circ u)\circ e_4,\\
(u\circ e_4)\circ v&=(u\circ \overline{v})\circ e_4,\\
(u\circ e_4)\circ (v\circ e_4)&=-\overline{v}\circ u.
\end{align*}
\end{lem}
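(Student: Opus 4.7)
The plan is to exploit two facts: octonion multiplication is $\R$-bilinear, which reduces each identity to a finite check on basis elements, and $\OO$ is alternative, so by Artin's theorem the subalgebra generated by any two elements is associative. Since $\HH$ has basis $\{1,e_1,e_2,e_3\}$ with $\overline{1}=1$ and $\overline{e_i}=-e_i$ for $i\in\{1,2,3\}$, it suffices to verify every identity on pure-imaginary basis elements once the trivial cases $u=1$ (and, where applicable, $v=1$) are dispatched.

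For the first identity $e_4\circ u=\overline{u}\circ e_4$ with $u=e_i$, the multiplication table gives $e_4\circ e_i=-(e_i\circ e_4)$ directly (for instance, $e_4\circ e_1=-e_5$ and $e_1\circ e_4=e_5$), and since $\overline{e_i}=-e_i$ this reads $e_4\circ e_i=\overline{e_i}\circ e_4$. The second and third identities then follow by Artin's theorem. The third is immediate: $u$ and $e_4$ generate an associative subalgebra, so $(u\circ e_4)\circ e_4=u\circ(e_4\circ e_4)=-u$. For the second, rewrite the first identity as $u\circ e_4=e_4\circ\overline{u}$ and apply associativity in the subalgebra generated by $\overline{u}$ and $e_4$: $e_4\circ(u\circ e_4)=e_4\circ(e_4\circ\overline{u})=(e_4\circ e_4)\circ\overline{u}=-\overline{u}$.

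The last three identities each involve three generators $u$, $v$, and $e_4$, so Artin's theorem no longer applies globally. After bilinearity reduces the check to $u,v\in\{e_1,e_2,e_3\}$, the diagonal case $u=v$ falls back inside the two-generator subalgebra of $u$ and $e_4$ and is handled by Artin together with the first identity. This leaves the six off-diagonal pairs $(e_i,e_j)$ with $i\neq j$, which are verified directly from the multiplication table. For example, for the fourth identity with $(u,v)=(e_1,e_2)$, one finds $e_1\circ(e_2\circ e_4)=e_1\circ e_6=-e_7$ while $(e_2\circ e_1)\circ e_4=(-e_3)\circ e_4=-e_7$, so the two sides agree; the remaining five off-diagonal pairs, and the fifth and sixth identities, are checked analogously.

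The main obstacle is precisely the non-associativity of $\OO$: no global reassociation is available. Artin's theorem rescues the one-variable identities as soon as only two distinct generators appear, while $\R$-bilinearity combined with the multiplication table handles the two-variable identities with at most six genuine cases per identity. There is no deep content beyond this, but the case analysis must be performed carefully, since the cyclic indices and relative signs in the table are easy to misread.
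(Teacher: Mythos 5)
Your argument is correct. Note that the paper does not prove this lemma at all---it is stated with a pointer to \cite[Lemma 2.10]{KO1}, where it is established by direct computation---so there is no in-paper proof to compare against. Your self-contained verification is sound: $\R$-bilinearity of $\circ$ and $\R$-linearity of conjugation legitimately reduce everything to basis elements; the right and left alternative laws $(xy)y=xy^2$ and $x(xy)=x^2y$ (or Artin's theorem) correctly dispatch the one-variable identities and the diagonal cases $u=v=e_i$; and your sample table checks (e.g.\ $e_1\circ e_6=-e_7=(-e_3)\circ e_4$) are consistent with the multiplication table used here. The use of alternativity is a nice organizational device that cuts the explicit table work down to the six off-diagonal pairs per two-variable identity, at the mild cost of invoking Artin's theorem where the cited reference simply grinds through the components; both routes are equally rigorous, and yours makes the structural reason for the conjugations (anticommutation of $e_4$ with $e_1,e_2,e_3$) more transparent.
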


\begin{lem}\label{lem:crossquae4}
If $x,y\in\mathbb{O}$ be as in (\ref{eq:quarepxy}), then
\begin{align*}
\m{u}\times e_4&=\m{u}\circ e_4,\\
e_4\times\m{a}&=-\m{a}\circ e_4,\\
\m{u}\times(\m{b}\circ e_4)&=-(\m{u}\times\m{b})\circ e_4-(\m{u}\cdot\m{b})e_4,\\
e_4\times(\m{b}\circ e_4)&=\m{b},\\
(\m{v}\circ e_4)\times\m{a}&=-(\m{v}\times\m{a})\circ e_4+(\m{a}\cdot\m{v})e_4,\\
(\m{v}\circ e_4)\times e_4&=-\m{v},\\
(\m{v}\circ e_4)\times(\m{b}\circ e_4)&=-\m{v}\times\m{b}.
\end{align*}
\end{lem}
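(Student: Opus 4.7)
The plan is to reduce each identity to a statement about octonion products, using the decomposition in equation (\ref{eq:octproddecomp}): for pure vectors $\m{x},\m{y}\in\OO$ (zero real parts), that formula collapses to
\begin{equation*}
\m{x}\circ\m{y}=-\m{x}\cdot\m{y}+\m{x}\times\m{y},
\end{equation*}
so $\m{x}\times\m{y}$ is precisely the vector part of $\m{x}\circ\m{y}$, while $-\m{x}\cdot\m{y}$ is the real part. Thus each identity reduces to computing an octonion product via Lemma \ref{lem:quaoctprod} and then subtracting off the (easily-computed) dot-product term.

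The identities fall naturally into three groups. The first two, $\m{u}\times e_4=\m{u}\circ e_4$ and $e_4\times\m{a}=-\m{a}\circ e_4$, follow from $e_4\circ u=\BAR{u}\circ e_4$ in Lemma \ref{lem:quaoctprod} (applied with $u=\m{a}$, using $\BAR{\m{a}}=-\m{a}$); the dot products vanish because $\m{u},\m{a}$ lie in the span of $e_1,e_2,e_3$, disjoint from $e_4$. Similarly, $(\m{v}\circ e_4)\times e_4=-\m{v}$ and $e_4\times(\m{b}\circ e_4)=\m{b}$ follow from $(u\circ e_4)\circ e_4=-u$ and $e_4\circ(u\circ e_4)=-\BAR{u}$, again with vanishing dot products since $\m{v}\circ e_4$ and $\m{b}\circ e_4$ lie in the span of $e_5,e_6,e_7$.

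The remaining three mixed identities require one extra step. Using $u\circ(v\circ e_4)=(v\circ u)\circ e_4$, $(u\circ e_4)\circ v=(u\circ\BAR{v})\circ e_4$, and $(u\circ e_4)\circ(v\circ e_4)=-\BAR{v}\circ u$ from Lemma \ref{lem:quaoctprod}, we expand each octonion product and then rewrite the resulting inner quaternion product (such as $\m{b}\circ\m{u}$) using the classical $3$-dimensional formula $\m{b}\circ\m{u}=-\m{b}\cdot\m{u}+\m{b}\times\m{u}$. For the third and fifth identities the two octonion vectors live in orthogonal subspaces (one in the span of $e_1,e_2,e_3$, the other in the span of $e_5,e_6,e_7$), so the octonion dot product vanishes and the $\circ e_4$ term and the $(\m{x}\cdot\m{y})e_4$ term appear directly from the quaternion expansion. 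For the seventh identity, $(\m{v}\circ e_4)\times(\m{b}\circ e_4)=-\m{v}\times\m{b}$, both vectors lie in the span of $e_5,e_6,e_7$, the octonion dot product equals $\m{v}\cdot\m{b}$, and this exactly cancels the $-\m{v}\cdot\m{b}$ real part produced by expanding $\m{b}\circ\m{v}$, leaving only $\m{b}\times\m{v}=-\m{v}\times\m{b}$ as the vector part.

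The computations are entirely mechanical once Lemma \ref{lem:quaoctprod} is invoked; no deeper structural step is needed. The main obstacle is therefore not conceptual but purely \emph{bookkeeping}: tracking signs, correctly distinguishing the real-part (dot-product) contribution from the vector-part (cross-product) contribution of each octonion product, and in the last case noticing the cancellation that leaves the result free of any $e_4$-component. I would present the seven cases uniformly in a single table-like layout following the template ``compute $\m{x}\circ\m{y}$ by Lemma \ref{lem:quaoctprod}, then read off the vector part.''
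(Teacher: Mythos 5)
Your proposal is correct and takes essentially the same route as the paper: both reduce every identity to the quaternion-form product rules of Lemma \ref{lem:quaoctprod} and then separate the diagonal (dot-product) terms from the off-diagonal (cross-product) terms. The only difference is presentational — the paper expands each cross product as an explicit double sum over basis elements and regroups the $i=j$ and $i\ne j$ parts, whereas you package the same split as ``$\m{x}\times\m{y}$ is the vector part of $\m{x}\circ\m{y}$,'' which is a slightly cleaner way to organize the identical computation.
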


\begin{proof}
Lemma \ref{lem:quaoctprod} implies
%We use the facts (see, e.g., \cite[Lemma 2.8]{KO1})
\begin{align*}
e_i\circ(e_j\circ e_4)&=(e_j\circ e_i)\circ e_4,\\
e_4\circ(e_j\circ e_4)&=e_j,\\
(e_i\circ e_4)\circ e_j&=-(e_i\circ e_j)\circ e_4,\\
(e_i\circ e_4)\circ e_4&=-e_i,\\
(e_i\circ e_4)\circ(e_j\circ e_4)&=e_j\circ e_i
\end{align*}
for $1\le i,j\le3$. Then
\begin{align*}
\m{u}\times(\m{b}\circ e_4)
&=\sum_{i,j=1}^3u_ib_je_i\circ(e_j\circ e_4)
=\left(\sum_{i,j=1}^3u_ib_je_j\circ e_i\right)\circ e_4\\
&=\left(-\mathop{\sum_{i,j=1}^3}_{i\ne j}u_ib_je_i\circ e_j-\sum_{i=1}^3u_ib_i\right)\circ e_4
=-(\m{u}\times\m{b})\circ e_4-(\m{u}\cdot\m{b})e_4,
\end{align*}
\begin{align*}
e_4\times(\m{b}\circ e_4)
=\sum_{j=1}^3b_je_4\circ(e_j\circ e_4)
=\sum_{j=1}^3b_je_j
=\m{b},
\end{align*}
\begin{align*}
(\m{v}\circ e_4)\times\m{a}
&=\sum_{i,j=1}^3v_ia_j(e_i\circ e_4)\circ e_j
=\left(-\sum_{i,j=1}^3v_ia_je_i\circ e_j\right)\circ e_4\\
&=\left(-\mathop{\sum_{i,j=1}^3}_{i\ne j}v_ia_je_i\circ e_j+\sum_{i=1}^3v_ia_i\right)\circ e_4
=-(\m{v}\times\m{a})\circ e_4+(\m{a}\cdot\m{v})e_4,
\end{align*}
\begin{align*}
(\m{v}\circ e_4)\times e_4
=\sum_{i=1}^3v_i(e_i\circ e_4)\circ e_4
=-\sum_{i=1}^3v_ie_i=-\m{v},
\end{align*}
and
\begin{align*}
(\m{v}\circ e_4)\times(\m{b}\circ e_4)
&=\mathop{\sum_{i,j=1}^3}_{i\ne j}v_ib_j(e_i\circ e_4)\circ(e_j\circ e_4)
=\mathop{\sum_{i,j=1}^3}_{i\ne j}v_ib_je_j\circ e_i
=-\mathop{\sum_{i,j=1}^3}_{i\ne j}v_ib_je_i\circ e_j\\
&=-\m{v}\times\m{b}.\qedhere
\end{align*}
\end{proof}

% Lemma \ref{lem:crossquae4} gives us a way to compute the cross product of two octonion vectors by transforming them to quaternion forms, and then by using the cross product in $\mathbb{R}^3$.

\begin{lem}\label{lem:crossquaform}
Denote the quaternion representations of the vectors $\m{x}$ and $\m{y}\in\OO$ as in \eqref{eq:quarepxy}. Then the cross product in quaternion form is
\begin{equation*}%\label{eq:crossqua}
\begin{aligned}
\m{x}\times\m{y}
&=v_0\m{b}-\m{v}b_0+\m{u}\times\m{a}-\m{v}\times\m{b} && \in\operatorname{span}\{e_1,e_2,e_3\}\\
&\quad +(\m{v}\cdot\m{a}-\m{u}\cdot\m{b})e_4 && \in\operatorname{span}\{e_4\}\\
&\quad +(\m{u}b_0-v_0\m{a}-\m{u}\times\m{b}-\m{v}\times\m{a})\circ e_4 && \in\operatorname{span}\{e_5,e_6,e_7\}
\end{aligned}
\end{equation*}
\end{lem}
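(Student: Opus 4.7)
The plan is a direct bilinear expansion of $\m{x}\times\m{y}$ using the decomposition
\[
\m{x}=\m{u}+v_0e_4+\m{v}\circ e_4,\qquad \m{y}=\m{a}+b_0e_4+\m{b}\circ e_4,
\]
combined with termwise evaluation via Lemma~\ref{lem:crossquae4}. First I would invoke bilinearity and anti-commutativity of the $7$-dimensional cross product to write $\m{x}\times\m{y}$ as a sum of nine elementary cross products, organized as a $3\times3$ grid according to which of the three summands of $\m{x}$ and $\m{y}$ are paired. The diagonal $\m{u}\times\m{a}$ is a classical $\R^3$ cross product lying in $\operatorname{span}\{e_1,e_2,e_3\}$, while the two pure $e_4$ terms $\m{u}\times(b_0e_4)=b_0(\m{u}\times e_4)$ and $(v_0e_4)\times\m{a}=v_0(e_4\times\m{a})$ are handled by the first two identities of Lemma~\ref{lem:crossquae4}; the $(v_0e_4)\times(b_0e_4)$ term vanishes by antisymmetry.

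Next I would substitute the remaining four identities from Lemma~\ref{lem:crossquae4} for the mixed terms $\m{u}\times(\m{b}\circ e_4)$, $(\m{v}\circ e_4)\times\m{a}$, $(v_0 e_4)\times(\m{b}\circ e_4)=v_0\m{b}$, $(\m{v}\circ e_4)\times(b_0e_4)=-b_0\m{v}$, and $(\m{v}\circ e_4)\times(\m{b}\circ e_4)=-\m{v}\times\m{b}$. At this point every summand is expressed as one of three types: a classical $\R^3$-cross product sitting in $\operatorname{span}\{e_1,e_2,e_3\}$, a scalar multiple of $e_4$, or a quaternion times $e_4$ landing in $\operatorname{span}\{e_5,e_6,e_7\}$.

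Finally I would collect terms by the three target subspaces. In $\operatorname{span}\{e_1,e_2,e_3\}$ I expect to gather $\m{u}\times\m{a}$ (diagonal), $v_0\m{b}$ and $-b_0\m{v}$ (from the $e_4$--($\m{b}\circ e_4$) and ($\m{v}\circ e_4$)--$e_4$ pairings), and $-\m{v}\times\m{b}$ (from the bottom-right). In $\operatorname{span}\{e_4\}$ the identities produce $-(\m{u}\cdot\m{b})e_4$ and $+(\m{a}\cdot\m{v})e_4$. In $\operatorname{span}\{e_5,e_6,e_7\}$ the remaining contributions are $b_0\m{u}\circ e_4$, $-v_0\m{a}\circ e_4$, $-(\m{u}\times\m{b})\circ e_4$, and $-(\m{v}\times\m{a})\circ e_4$, which combine to $(\m{u}b_0-v_0\m{a}-\m{u}\times\m{b}-\m{v}\times\m{a})\circ e_4$.

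The only real obstacle is the bookkeeping: nine products, each yielding up to two summands in different subspaces, with a sign in almost every identity of Lemma~\ref{lem:crossquae4}. So the plan is to lay out the nine terms in a table, apply one identity per cell, and then sort the output by the target subspace; nothing beyond careful arithmetic is required once Lemma~\ref{lem:crossquae4} is in hand.
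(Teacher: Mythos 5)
Your proposal is correct and follows essentially the same route as the paper's proof: expand $\m{x}\times\m{y}$ bilinearly into the nine pairings of the three summands of each vector, evaluate each term with Lemma \ref{lem:crossquae4} (the $(v_0e_4)\times(b_0e_4)$ term vanishing), and sort the results into the three subspaces. All nine termwise evaluations and the final regrouping you describe match the paper's computation exactly.
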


\begin{proof}
By Lemma \ref{lem:crossquae4}, we compute
\begin{align*}
\m{x}\times\m{y}
&=\m{u}\times\m{a}+\m{u}\times(b_0e_4)+\m{u}\times(\m{b}\circ e_4)\\
&\quad +(v_0e_4)\times\m{a}+(v_0e_4)\times(b_0e_4)+(v_0e_4)\times(\m{b}\circ e_4)\\
&\quad +(\m{v}\circ e_4)\times\m{a}+(\m{v}\circ e_4)\times(b_0e_4)+(\m{v}\circ e_4)\times(\m{b}\circ e_4)\\
&=\m{u}\times\m{a}+\m{u}b_0\circ  e_4-(\m{u}\times\m{b})\circ e_4\\
&\quad -v_0\m{a}\circ e_4+0+v_0\m{b}\\
&\quad -(\m{v}\times\m{a})\circ e_4-\m{v}b_0-\m{v}\times\m{b}\\
&=\m{u}\times\m{a}-\m{v}\times\m{b}+v_0\m{b}-\m{v}b_0
%_{\text{anticommutative}}
%&&\in\operatorname{span}\{e_1,e_2,e_3\}
\\
&\quad +(\m{v}\cdot\m{a}-\m{u}\cdot\m{b})e_4
%%_{\text{anticommutative}}
%\in\operatorname{span}\{e_4\}
\\
&\quad +(\m{u}b_0-v_0\m{a}-\m{u}\times\m{b}-\m{v}\times\m{a})\circ e_4.\qedhere
%_{\text{anticommutative}}
%\in\operatorname{span}\{e_5,e_6,e_7\}
\end{align*}
\end{proof}

% Using the preceding interpretation for the octonion cross product, we obtain the following representation for the octonion product.

\begin{cor}\label{cor:octprodqua}
If $x,y\in\mathbb{O}$ be as in (\ref{eq:quarepxy}), then
\begin{equation*}%\label{eq:octprodqua}
\begin{aligned}
x\circ y=&u_0a_0-v_0b_0-\m{u}\cdot\m{a}-\m{v}\cdot\m{b}&&\in\R\\
&+u_0\m{a}+a_0\m{u}+v_0\m{b}-\m{v}b_0+\m{u}\times\m{a}-\m{v}\times\m{b}&&\in\operatorname{span}\{e_1,e_2,e_3\}\\
&+(u_0b_0+a_0v_0+\m{v}\cdot\m{a}-\m{u}\cdot\m{b})e_4&&\in\operatorname{span}\{e_4\}\\
&+(u_0\m{b}+a_0\m{v}+\m{u}b_0-v_0\m{a}-\m{u}\times\m{b}-\m{v}\times\m{a})\circ e_4&&\in\operatorname{span}\{e_5,e_6,e_7\}
\end{aligned}
\end{equation*}
\end{cor}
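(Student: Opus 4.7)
The plan is to reduce the statement to the general octonion product decomposition \eqref{eq:octproddecomp},
\[
x\circ y=x_0y_0-\m{x}\cdot\m{y}+x_0\m{y}+y_0\m{x}+\m{x}\times\m{y},
\]
and then evaluate each summand in quaternion form by sorting contributions according to the orthogonal grading $\R\oplus\operatorname{span}\{e_1,e_2,e_3\}\oplus\operatorname{span}\{e_4\}\oplus\operatorname{span}\{e_5,e_6,e_7\}$. The quaternion representations \eqref{eq:quarepxy} give $x_0=u_0$, $y_0=a_0$, and the vector parts split as
\[
\m{x}=\m{u}+v_0e_4+\m{v}\circ e_4,\qquad \m{y}=\m{a}+b_0e_4+\m{b}\circ e_4,
\]
with the three summands living in the three mutually orthogonal subspaces listed above.

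I would proceed in four short bookkeeping steps. \emph{Step 1: the real part.} Orthogonality makes the dot product split cleanly as $\m{x}\cdot\m{y}=\m{u}\cdot\m{a}+v_0b_0+\m{v}\cdot\m{b}$, and combined with $x_0y_0=u_0a_0$ this yields exactly the scalar line of the corollary. \emph{Step 2: the scalar-times-vector terms.} Simple distribution gives
\[
x_0\m{y}+y_0\m{x}=(u_0\m{a}+a_0\m{u})+(u_0b_0+a_0v_0)e_4+(u_0\m{b}+a_0\m{v})\circ e_4,
\]
which already contributes the ``diagonal'' pieces in each of the three vector components.

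\emph{Step 3: the cross product.} Here I invoke Lemma \ref{lem:crossquaform} verbatim, which gives the contribution $v_0\m{b}-\m{v}b_0+\m{u}\times\m{a}-\m{v}\times\m{b}$ in $\operatorname{span}\{e_1,e_2,e_3\}$, $(\m{v}\cdot\m{a}-\m{u}\cdot\m{b})e_4$ in $\operatorname{span}\{e_4\}$, and $(\m{u}b_0-v_0\m{a}-\m{u}\times\m{b}-\m{v}\times\m{a})\circ e_4$ in $\operatorname{span}\{e_5,e_6,e_7\}$. \emph{Step 4: collection.} Adding the Step~2 and Step~3 contributions grade by grade, and prepending the Step~1 scalar, reproduces the four lines of the statement exactly.

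No conceptual obstacle arises: once equation \eqref{eq:octproddecomp} and Lemma \ref{lem:crossquaform} are in hand, the corollary is a purely mechanical sorting of terms. The only point deserving a sentence of justification is the orthogonal splitting used in Step~1, which rests on the fact that the three sets $\{e_1,e_2,e_3\}$, $\{e_4\}$ and $\{e_5,e_6,e_7\}$ consist of orthonormal basis vectors of $\R^7$; all remaining work is straightforward arithmetic.
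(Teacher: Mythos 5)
Your proof is correct and follows exactly the route the paper intends: the corollary is stated immediately after Lemma \ref{lem:crossquaform} precisely so that it follows by substituting the quaternion splittings of $x_0y_0-\m{x}\cdot\m{y}$ and $x_0\m{y}+y_0\m{x}$ into the decomposition \eqref{eq:octproddecomp} and adding the cross-product terms from that lemma grade by grade. The term-by-term check confirms all four lines, so nothing is missing.
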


\subsection{The Clifford algebra $\Cl_{0,7}$ and triality}
Since the dimension of octonions and Clifford paravectors is $8$, they behave similarly as vector spaces. Moreover, we may ask if there is a connection between the octonion and the Clifford product? The answer is given by Pertti Lounesto in his book \cite{L}. We will recall his ideas here in detail. Let us recall the basic definitions and properties of Clifford algebras.

 We continue working with the basis $\{ e_0,e_1,...,e_7\}$ for $\mathbb{R}^8$. The Clifford product is defined by
\[
e_ie_j+e_je_i=-2\delta_{ij},\ i,j=1,...,7,
\]
where $\delta_{ij}$ is the Kronecker delta symbol. Here, $e_0=1$. Then, similarly than in the case of octonions, $e_0^2=1$, and $e^2_j=-1$ for all $j=1,\ldots,7$. The Clifford product $e_ie_j$ is not necessary a vector or a scalar. 
This product generates an associative algebra, called 
the Clifford algebra, denoted by $\Cl_{0,7}$. The dimension of this Clifford algebra is $2^7$, and an element $a\in \Cl_{0,7}$ may be represented as a sum
\[
a=\sum_{j=0}^7[a]_j
\]
of a scalar part $[a]_0$, generated by $1$, a $1$-vector part $[a]_1$, generated by $e_j$'s, $2$-vector part $[a]_2$, generated by the products $e_ie_j$, where $1\le i<j\le7$, etc.
Clifford numbers of the form $[a]_1$ are called \emph{vectors} and $[a]_{0,1}=[a]_0+[a]_1$ \emph{paravectors}. The set of paravectors may be identified with $\mathbb{R}^8$.

The Clifford product of two paravectors $x$ and $y$ can be written
\begin{align}
xy&=(x_0+\m{x})(y_0+\m{y})=\sum_{i,j=0}^7x_iy_je_ie_j\nonumber\\
&=\sum_{i=0}^7x_iy_ie_i^2+\mathop{\sum_{i,j=0}^7}_{i\ne j}x_iy_je_ie_j\nonumber\\
&=x_0y_0-\sum_{i=1}^7x_iy_i+x_0\sum_{i=1}^7y_ie_i+y_0\sum_{i=1}^7x_ie_i
+\mathop{\sum_{i,j=1}^7}_{i\ne j}x_iy_je_ie_j\nonumber\\
&=x_0y_0-\m{x}\cdot\m{y}
+x_0\m{y}+y_0\m{x}+\m{x}\wedge\m{y},\label{eq:clparavect}
\end{align}
where $\m{x}\wedge\m{y}$ is the \emph{wedge product} of vectors $\m{x}$ and $\m{y}$.
In particular, $\m{x}\m{y}=\m{x}\wedge\m{y}-\m{x}\cdot\m{y}$.

The reader can see that formally the octonion and the Clifford products are similar, and a reasonable question is, how they are connected?
We would like to construct the octonion product using the Clifford algebra $\Cl_{0,7}$. Let us consider the octonion product of the basis elements $e_i$ and $e_j$, 
where $1\le i,j\le7$, $i\ne j$:
\[
e_i\circ e_j=e_k.
\]
Then $1\le k\le7$, and $i\ne k\ne j$. The corresponding Clifford product $e_ie_j$ may be mapped to $e_k$ by multiplying it by the trivector $e_je_ie_k$, i.e., 
\[
(e_ie_j)(e_je_ie_k)=e_i^2e_j^2e_k=e_k,
\]
and by the same trivector $e_je_i$ is mapped to $-e_k$. If $\m{a}$ and $\m{b}$ are vectors, 
then
\[
\m{a}\,\m{b}(e_je_ie_k)=(a_ib_j-a_jb_i)e_k+[\m{a}\,\m{b}(e_je_ie_k)]_3+[\m{a}\,\m{b}(e_je_ie_k)]_5.
\]
Picking the $1$-vector part
\[
[\m{a}\,\m{b}(e_je_ie_k)]_1=(a_ib_j-a_jb_i)e_k
\]
gives us a part of the $k$th component of the octonion product $\m{a}\circ\m{b}$. Using this idea, we may express the octonion product $a\circ b$ as the paravector part of the Clifford product
$ab(1-W)$, where $W$ is a suitable $3$-vector.

\begin{lem}[{\cite[Sec 23.3]{L}, \cite[Lem 4.1]{Auli}}]\label{lem:abW}
Define
\[
W=e_{123}+e_{145}+e_{176}+e_{246}+e_{257}+e_{347}+e_{365}.
\]
Let $a=a_0+\m{a}$ and $b=b_0+\m{b}$ be paravectors. Then
\[
a\circ b=[ab(1-W)]_{0,1}
\]
and in particular, $\m{a}\times\m{b}=-[(\m{a}\wedge\m{b})W]_1$.
\end{lem}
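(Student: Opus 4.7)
The plan is to reduce the paravector identity to the cross-product identity by a grade-degree argument, and then verify the cross-product identity by a finite computation on basis elements.

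First I would expand the Clifford product as in \eqref{eq:clparavect},
\begin{equation*}
ab = (a_0 b_0 - \m{a}\cdot\m{b}) + (a_0\m{b} + b_0\m{a}) + \m{a}\wedge\m{b},
\end{equation*}
which has scalar, $1$-vector, and $2$-vector parts. By \eqref{eq:octproddecomp}, $a\circ b$ has exactly the same scalar and $1$-vector parts, with $\m{a}\wedge\m{b}$ replaced by $\m{a}\times\m{b}$. Since $W$ is a pure $3$-vector, the Clifford product of a homogeneous $k$-vector with $W$ decomposes into parts of grades $|k-3|, |k-3|+2, \ldots, k+3$. Hence scalar$\,\cdot W$ is grade $3$; $1$-vector$\,\cdot W$ has grades $2$ and $4$; and $2$-vector$\,\cdot W$ has grades $1$, $3$, $5$. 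Therefore
\begin{equation*}
[abW]_{0,1} = [(\m{a}\wedge\m{b})W]_1,
\end{equation*}
and the identity $a\circ b = [ab(1-W)]_{0,1}$ reduces to the claim $\m{a}\times\m{b} = -[(\m{a}\wedge\m{b})W]_1$, i.e., the second statement of the lemma.

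For this second identity I would exploit the combinatorial feature of $W$ that each unordered pair $\{i,j\} \subset \{1,\ldots,7\}$ is contained in \emph{exactly one} of the seven listed trivectors; this is consistent with $\binom{7}{2} = 21 = 7\cdot 3$. Expanding
\begin{equation*}
(\m{a}\wedge\m{b})\,W = \sum_{1\le i<j\le 7}(a_i b_j - a_j b_i)\,e_{ij}\,W,
\end{equation*}
and noting that $[e_{ij}\,e_{pqr}]_1\ne 0$ only when $\{i,j\}\subset\{p,q,r\}$ (otherwise the product has grade $3$ or $5$), each term $e_{ij}W$ contributes through a single trivector. A short computation of the form $e_{ij}\cdot e_{pqr}=\pm e_k$, where $\{p,q,r\}=\{i,j,k\}$, then pins down a definite basis vector whose sign is determined by the cyclic order of the trivector's indices.

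The main computational obstacle is the sign-matching: for each ordered pair $(i,j)$ with $i\ne j$, one must verify that $-[e_{ij}\cdot(\text{the unique trivector in }W\text{ containing }\{i,j\})]_1 = e_i\circ e_j$. This is a finite check of $21$ pair relations, naturally grouped into the seven trivectors (three pairs per trivector), and it ultimately amounts to cross-referencing the prescribed $W$ with the octonion multiplication table from Section~2.1. Once the signs are confirmed on basis elements, bilinearity in $\m{a}$ and $\m{b}$ extends the cross-product identity to arbitrary vectors, and combined with the grade reduction above this yields the full statement.
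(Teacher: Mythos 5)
Your proposal is correct and follows essentially the same route as the paper's proof: reduce to the identity $\m{a}\times\m{b}=-[(\m{a}\wedge\m{b})W]_1$ by a grade argument (the paper phrases this as $e_ie_je_\nu$ being a vector only when the pair $ij$ lies in the triplet $\nu$), then use the fact that each pair $1\le i<j\le7$ occurs in exactly one trivector of $W$ to reduce everything to the finite check $e_ie_je_{\nu_0}=-e_i\circ e_j$ against the multiplication table. No substantive differences.
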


Lounesto states Lemmas \ref{lem:abW} and \ref{lem:16abi} without proofs at pages 303--304 in \cite{L}, and for a different multiplication table of octonions. Ven\"al\"ainen gives a proof for Lemma \ref{lem:abW} in her licentiate thesis \cite{Auli}. For the convenience of the reader, we give a short proof of Lemma \ref{lem:abW} here.

\begin{proof}
We compute
\[
[ab(1-W)]_{0,1}
=[ab]_{0,1}-[abW]_1
=a_0b_0-\m{a}\cdot\m{b}+a_0\m{b}+b_0\m{a}-[abW]_1.
\]
By \eqref{eq:octproddecomp} and \eqref{eq:clparavect}, it is enough to show that $\m{a}\times\m{b}=-[(\m{a}\wedge\m{b})W]_1$. Consider the triplets
\begin{equation*}%\label{eq:triplets}
\nu=123, 145, 176, 246, 257, 347, 365.
\end{equation*}
The product $e_ie_je_\nu$ is a vector only if the pair of indices $ij$ belongs to the triplet $\nu$. Since the cross and the wedge products
\[
\m{a}\times\m{b}=\mathop{\sum_{i,j=1}^7}_{i<j}(a_ib_j-a_jb_i)e_i\circ e_j
\quad\text{and}\quad
\m{a}\wedge\m{b}=\mathop{\sum_{i,j=1}^7}_{i<j}(a_ib_j-a_jb_i)e_ie_j
\]
have the same coefficients, and each pair $ij$, $1\le i<j\le7$, is contained in exatly one of the triplets $\nu$, say $\nu_0$, it is enough to check that $e_ie_je_{\nu_0}=-e_i\circ e_j$ for all such pairs $ij$.
\end{proof}
A straightforward computation shows that the trivector $W$ is invertible. 
\begin{lem}
$W^{-1}=\dfrac17(W-6e_{12\cdots7})$
\end{lem}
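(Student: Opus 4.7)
The plan is to verify directly that $W\cdot \frac{1}{7}(W - 6I) = 1$, where $I := e_{12\cdots 7}$ denotes the pseudoscalar of $\Cl_{0,7}$. This reduces to establishing the single identity
\[
W^2 \;=\; 7 \;+\; 6\,WI.
\]
Since $n=7$ is odd, the two sign counts $(-1)^{i-1}$ and $(-1)^{n-i}$ arising when moving $e_i$ across $I = e_1\cdots e_7$ agree, so $I$ commutes with every basis vector and hence is central. Thus a one-sided inverse is automatically two-sided, and $W^{-1} = \frac{1}{7}(W - 6I)$ will follow.

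To expand $W^2$, label the seven summands of $W$ as $T_1,\ldots,T_7$. First, each $T_\nu = e_{ijk}$ with $i,j,k$ distinct satisfies $T_\nu^2 = 1$ in $\Cl_{0,7}$: an even number of anticommutations brings the two copies of each $e_p$ together, and the resulting three factors $e_p^2 = -1$ give a net sign $(-1)^{\text{even}}\cdot(-1)^3$, which, combined with the parity of the rearrangement, evaluates to $+1$. The diagonal part of $W^2$ therefore contributes $7$. The off-diagonal part is analyzed using the Fano-plane structure of the index triples of $W$: any two distinct triples share exactly one index, and through each point pass exactly three triples.

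The core computational claim is: if $T_\nu$ and $T_\mu$ share only the index $k$, and $T_\lambda$ denotes the third trivector of $W$ whose triple also contains $k$, then
\[
T_\nu T_\mu \;=\; T_\lambda\, I.
\]
Indeed, collapsing the two copies of $e_k$ in $T_\nu T_\mu$ yields a signed 4-vector supported on $(\nu \cup \mu)\setminus\{k\} = \{1,\ldots,7\}\setminus T_\lambda$; on the other hand, $T_\lambda I$ is (up to sign) the same basis 4-vector by Hodge duality between $p$- and $(n-p)$-vectors. The main technical obstacle is matching these two signs exactly: it depends on the specific orderings chosen for the indices in each summand of $W$ (these orderings encode the octonion multiplication), and one must bookkeep the permutation parities carefully. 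Verifying the identity on a few representative pairs (say one per orbit under the Fano-plane symmetries) suffices, the remaining cases being analogous.

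Granting the claim, we count: for each $T_\lambda$, the ordered pairs $(T_\nu,T_\mu)$ producing $T_\lambda I$ are in bijection with a choice of point $k\in T_\lambda$ (three choices) together with an ordering of the two other Fano trivectors through $k$ (two choices), giving $3\cdot 2 = 6$ ordered pairs per $\lambda$. Therefore
\[
\sum_{\nu\ne\mu} T_\nu T_\mu \;=\; 6\sum_{\lambda=1}^{7} T_\lambda I \;=\; 6\,WI,
\]
so $W^2 = 7 + 6\,WI$. Centrality of $I$ then yields $W(W-6I) = W^2 - 6WI = 7$, and division by $7$ completes the proof.
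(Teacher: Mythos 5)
Your argument is correct, and it is genuinely more informative than what the paper offers: the paper simply asserts that ``a straightforward computation shows that the trivector $W$ is invertible'' and states the formula without proof, presumably intending a brute-force expansion of $W\cdot\frac17(W-6e_{12\cdots7})$. Your route isolates the real content, namely the single identity $W^2=7+6We_{12\cdots7}$, and derives it from the Fano-plane combinatorics of the seven index triples. All the structural ingredients check out: $e_{12\cdots7}$ is central since $n=7$ is odd, each basis trivector squares to $+1$ in $\Cl_{0,7}$, two trivectors sharing exactly one index commute (sign $(-1)^{3\cdot3-1}=+1$, so both orderings of a pair contribute equally), and the support count $(\nu\cup\mu)\setminus\{k\}=\{1,\ldots,7\}\setminus\lambda$ is right. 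Your identity $W^2=7+6We_{12\cdots7}$ is moreover exactly what makes $I^{\pm}=\frac1{16}(1+We_{12\cdots7})(1\pm e_{12\cdots7})$ idempotent, which corroborates it independently. The one soft spot is the sign claim $T_\nu T_\mu=T_\lambda e_{12\cdots7}$ \emph{with the specific orientations appearing in} $W$ (note $e_{176}=-e_{167}$ and $e_{365}=-e_{356}$): your appeal to ``Fano-plane symmetries'' is not quite enough on its own, because a symmetry of the incidence structure need not preserve these signed orderings. Strictly you should verify all $21$ unordered pairs (a finite, routine check --- I confirmed several, including pairs involving $e_{176}$ and $e_{365}$, and the signs do come out uniformly $+$), or observe that the sign claim is equivalent to the compatibility of $W$ with the octonion structure constants already established in Lemma~\ref{lem:abW}. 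With that caveat closed, your proof is complete.
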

In the above, we identify octonions $\mathbb{O}$ with the $8$-dimensional paravectors. The dimension $8$ plays a special role in the theory of spin groups, since $\operatorname{Spin}(8)$ has the so called \emph{exceptional automorphisms}. This feature is called \emph{triality}, and the first time it was noticed in the book of Study \cite{S}. For modern references, see \cite{H,L,P}. The triality means that in addition to paravectors, we may 
identify octonions with the spinor spaces $S^{\pm}$.
The spinor spaces may be realized by
\[
S^{\pm}=\Cl_{0,7}I^{\pm},
\]
where $I^{\pm}$ is a primitive idempotent,
\begin{equation}\label{eq:primidef}
I^{\pm}=\frac{1}{16}(1+We_{12\cdots7})(1\pm e_{12\cdots7}),
\end{equation}
see \cite{DSS,L}. A straightforward computation shows that the octonion product in spinor spaces may be computed as follows.

%Define
%\begin{align*}
%I&=\frac{1}{16}(1+We_{12\cdots7})(1-e_{12\cdots7})\\
%&=\frac{1}{2^7}(1-e_{123})(1-e_{145})(1-e_{176})(1-e_{246})(1-e_{257})(1-e_{347})(1-e_{365})\\
%&=\frac{1}{16}(1-W+We_{12\cdots7}-e_{12\cdots7})\label{eq:IWe}\\
%&=\frac{1}{16}(1-e_{123}+e_{145}+e_{176}+e_{246}+e_{257}+e_{347}+e_{365}\\
%&\qquad -e_{1247}+e_{1256}+e_{1346}+e_{1357}-e_{2345}+e_{2367}+e_{4567}\\
%&\qquad -e_{12\cdots7}).
%\end{align*}

\begin{lem}[{\cite[Sec 23.3]{L}}]\label{lem:16abi}
For paravectors $a$ and $b$ we have
\begin{equation}
a\circ b=16[abI^{-}]_{0,1}.
\end{equation}
\end{lem}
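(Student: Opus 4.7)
The plan is to reduce the statement to Lemma \ref{lem:abW}, which already asserts $a\circ b=[ab(1-W)]_{0,1}$. So the task is to show that, on paravector projections, $16I^{-}$ acts like $1-W$, i.e.\ that
\[
\bigl[ab\,(16I^{-})\bigr]_{0,1}=\bigl[ab(1-W)\bigr]_{0,1}
\]
for all paravectors $a,b$.

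The first step is to simplify $16I^{-}=(1+We_{12\cdots7})(1-e_{12\cdots7})$. Two facts about the pseudoscalar $\omega:=e_{12\cdots7}$ in $\Cl_{0,7}$ are needed: (i) $\omega^{2}=1$, and (ii) $\omega$ is central (both are consequences of $n=7$ being odd, and can be verified by the standard reorder count $\omega^{2}=(-1)^{n(n-1)/2}(-1)^{n}$ and $e_{i}\omega=(-1)^{n-1}\omega e_{i}$). Expanding the product and using (i)–(ii) yields the clean identity
\[
16I^{-}=1-\omega+W\omega-W=(1-W)(1-\omega).
\]

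Thus $16\,ab\,I^{-}=ab(1-W)-ab(1-W)\omega$, and taking paravector parts gives
\[
16\bigl[abI^{-}\bigr]_{0,1}=\bigl[ab(1-W)\bigr]_{0,1}-\bigl[ab(1-W)\omega\bigr]_{0,1}.
\]
The second step is to show that the last term vanishes by a grade argument. Since $a$ and $b$ are paravectors, $ab$ has grades in $\{0,1,2\}$; since $W$ is a $3$-vector, the Clifford product $abW$ has grades in $\{1,2,3,4,5\}$, so $ab(1-W)$ lies in grades $0$ through $5$. Right-multiplication by the pseudoscalar $\omega$ sends a $k$-vector to a $(7-k)$-vector (up to sign), hence maps $ab(1-W)$ into grades $2$ through $7$, with no contribution of grade $0$ or $1$. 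Therefore $[ab(1-W)\omega]_{0,1}=0$, and combining with Lemma \ref{lem:abW} gives $16[abI^{-}]_{0,1}=[ab(1-W)]_{0,1}=a\circ b$.

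There is no real obstacle here: the only thing one must be careful about is the sign bookkeeping when verifying $\omega^{2}=1$ and the centrality of $\omega$; everything else is a direct grade count. The proof would be essentially two lines plus the grade-dimension remark.
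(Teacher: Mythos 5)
Your proof is correct and follows essentially the same route as the paper's ``straightforward computation'' (carried out explicitly in the proof of Lemma \ref{lem:abIcl}): expand $16I^{-}=1-W+We_{12\cdots7}-e_{12\cdots7}$, discard by grade counting every term that cannot contribute to the paravector part, and reduce to Lemma \ref{lem:abW}. Your factorization $16I^{-}=(1-W)(1-e_{12\cdots7})$ merely packages the paper's term-by-term grade bookkeeping a little more compactly.
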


\section{Octonion analysis}\label{sec:octanal}

In this section we recall the basic facts of octonion analysis, i.e., the theory of Cauchy-Riemann operators in the octonionic setting. After that, we carefully study the general structure of the null solutions of these operators and define three different classes of regular functions: left-, right-, B-, and R-regular functions. R-regular functions are just  solutions of the classical Riesz system. We use the Riesz system here as a familiar reference to better understand the structure of octonion regular functions. In Clifford analysis the corresponding function classes are equal. This structural difference is a fundamental difference between octonion and Clifford analyses.

\subsection{Cauchy-Riemann operators}

A function $f\colon\R^8\to\OO$ is of the form $f=f_0+f_1e_1+\cdots+f_7e_7=f_0+\m{f}$, where $f_j\colon\R^8\to\R$. We define the \emph{Cauchy--Riemann operator}
\[
\cro=\partial_{x_0}+e_1\circ\partial_{x_1}+...+e_7\circ\partial_{x_7}.
\]
The vector part of it
\[
\crovec=e_1\circ\partial_{x_1}+...+e_7\circ\partial_{x_7}
\]
is called the \emph{Dirac operator}. If the coordinate functions of $f$ have partial derivatives, then $\cro$ operates on $f$ from the left and from the right as
\[
\cro f
%=\sum_{i=0}^7e_i\circ\frac{\partial f}{\partial_{x_i}}
=\sum_{i,j=0}^7e_i\circ e_j\partial_{x_i}f_j
\quad\text{and}\quad
f\cro
%=\sum_{i=0}^7fe_i\partial_{x_i}
=\sum_{i,j=0}^7e_j\circ e_i\partial_{x_i}f_j.
\]
Decomposition \eqref{eq:octproddecomp} gives (see \cite{LiPeng})
\begin{align}
\cro f&=\partial_{x_0}f_0-\crovec\cdot\m{f}+\partial_{x_0}\m{f}+\crovec f_0+\crovec\times \m{f}\quad\text{and}\label{eq:dxfdecomp}\\
f\cro&=\partial_{x_0}f_0-\crovec\cdot\m{f}+\partial_{x_0}\m{f}+\crovec f_0-\crovec\times \m{f},\label{eq:fdxdecomp}
\end{align}
where $\partial_{x_0}f_0-\crovec\cdot\m{f}$ is the \emph{divergence} of $f$ and $\crovec\times \m{f}$ is the \emph{rotor} of $\m{f}$.

If $\cro f=0$ (resp. $f\cro=0$), then $f$ is called \emph{left} (resp. \emph{right}) \emph{regular}.

In Clifford analysis one studies functions $f\colon\mathbb{R}^8\to\Cl_{0,7}$. We define the Cauchy-Riemann operator similarly as in octonion analysis:
\[
\partial_x=\partial_{x_0}+e_1\partial_{x_1}+...+e_7\partial_{x_7}=\partial_{x_0}+\partial_{\m{x}}.
\]
Functions satisfying $\partial_xf=0$ (resp. $f\partial_x=0$) on $\R^8$ are called \emph{left} (resp. \emph{right}) \emph{monogenic}. In this paper we only need to consider paravector valued functions
\[
f=f_0+f_1e_1+...+f_7e_7.
\]

\subsection{Left-, Right-, B- and R- regular functions}
\label{sec:MTBR}
%\begin{defn}\label{def:monogenity}
%Let $\Omega\subset\mathbb{O}$ be open and assume that the coordinates of $f\colon\Omega\to\mathbb{O}$ have partial derivatives. 
Comparing the real and vector parts in \eqref{eq:dxfdecomp} and \eqref{eq:fdxdecomp} yields the following well known results.

\begin{prop}\label{prop:mtregular}
A function $f\colon\R^8\to\OO$ is left regular if and only if it satisfies the \emph{Moisil--Teodorescu type system}
\begin{equation}\label{eq:mtsystem}
\begin{aligned}
\partial_{x_0}f_0-\crovec\cdot\m{f}&=0,\\
\partial_{x_0}\m{f}+\crovec f_0+\crovec\times \m{f}&=0,
\end{aligned}
\end{equation}
or componentwise
\begin{equation}\label{eq:mtsystemcomp}
\begin{aligned}
\partial_{x_0}f_0-\partial_{x_1}f_1-\ldots-\partial_{x_7}f_7=0,\\
\parder{0}{1}+\parder{1}{0}
+\parder{2}{3}-\parder{3}{2}
+\parder{4}{5}-\parder{5}{4}
-\parder{6}{7}+\parder{7}{6}=0,\\
\parder{0}{2}+\parder{2}{0}
-\parder{1}{3}+\parder{3}{1}
+\parder{4}{6}-\parder{6}{4}
+\parder{5}{7}-\parder{7}{5}=0,\\
\parder{0}{3}+\parder{3}{0}
+\parder{1}{2}-\parder{2}{1}
+\parder{4}{7}-\parder{7}{4}
-\parder{5}{6}+\parder{6}{5}=0,\\
\parder{0}{4}+\parder{4}{0}
-\parder{1}{5}+\parder{5}{1}
-\parder{2}{6}+\parder{6}{2}
-\parder{3}{7}+\parder{7}{3}=0,\\
\parder{0}{5}+\parder{5}{0}
+\parder{1}{4}-\parder{4}{1}
-\parder{2}{7}+\parder{7}{2}
+\parder{3}{6}-\parder{6}{3}=0,\\
\parder{0}{6}+\parder{6}{0}
+\parder{1}{7}-\parder{7}{1}
+\parder{2}{4}-\parder{4}{2}
-\parder{3}{5}+\parder{5}{3}=0,\\
\parder{0}{7}+\parder{7}{0}
-\parder{1}{6}+\parder{6}{1}
+\parder{2}{5}-\parder{5}{2}
+\parder{3}{4}-\parder{4}{3}=0.
\end{aligned}
\end{equation}
\end{prop}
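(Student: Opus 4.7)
The plan is to treat the two forms of the system separately, exploiting the decomposition \eqref{eq:dxfdecomp} for the first and the multiplication table for the second.

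First, I would simply unpack \eqref{eq:dxfdecomp}. The term $\partial_{x_0}f_0-\crovec\cdot\m{f}$ is a real scalar and the term $\partial_{x_0}\m{f}+\crovec f_0+\crovec\times\m{f}$ lies in $\operatorname{span}\{e_1,\ldots,e_7\}$, since $\partial_{x_0}\m{f}$ and $\crovec f_0$ are clearly vectors, and $\crovec\times\m{f}$ is a vector by its very definition in \eqref{eq:octproddecomp}. Because $\R$ and $\operatorname{span}\{e_1,\ldots,e_7\}$ intersect trivially, $\cro f=0$ splits uniquely as the vanishing of these two pieces, which is exactly \eqref{eq:mtsystem}.

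Second, to pass from \eqref{eq:mtsystem} to the componentwise system \eqref{eq:mtsystemcomp}, I would write out the vector-part equation coordinate by coordinate. The contributions from $\partial_{x_0}\m{f}$ and $\crovec f_0$ to the $e_k$-component are immediate: they give $\partial_{x_0}f_k+\partial_{x_k}f_0$. The real work is in the cross product, for which one uses
\[
\crovec\times\m{f}=\mathop{\sum_{i,j=1}^{7}}_{i<j}(\partial_{x_i}f_j-\partial_{x_j}f_i)\,e_i\circ e_j,
\]
so that each $e_k$-component is read off from the multiplication table by collecting all pairs $i<j$ with $e_i\circ e_j=\pm e_k$. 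Conveniently, the seven triplets $\nu=123,145,176,246,257,347,365$ from the proof of Lemma \ref{lem:abW} partition the $21$ pairs $(i,j)$ with $1\le i<j\le 7$ into seven triples, one per index $k$, so each coordinate equation in \eqref{eq:mtsystemcomp} has exactly three cross-product contributions beyond $\partial_{x_0}f_k+\partial_{x_k}f_0$.

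The only real obstacle is bookkeeping: one must verify the signs in every one of the seven coordinate equations against the multiplication table. The real part equation is trivial from the definition of $\crovec\cdot\m{f}$. The scalar equation is then the first line of \eqref{eq:mtsystemcomp}, and the seven vector lines follow by this routine but careful table lookup. No delicate argument is required beyond the sign check.
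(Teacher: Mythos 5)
Your proposal is correct and follows the same route as the paper, which obtains the proposition by comparing the real and vector parts in \eqref{eq:dxfdecomp} and then reading off the componentwise form from the multiplication table. Your extra observation that the seven triplets from the proof of Lemma \ref{lem:abW} organize the $21$ cross-product pairs into three contributions per coordinate equation is a helpful way to structure the sign check, but it is the same argument.
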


We will denote the space of left regular functions by $\mathcal{M}^{(\ell)}$, and similary, right regular functions by $\mathcal{M}^{(r)}$.

\begin{prop}\label{prop:rregular}
A function $f\colon\R^8\to\OO$ is both left and right regular if and only if it satisfies the system
\begin{equation}\label{eq:rieszoct}
\begin{aligned}
\partial_{x_0}f_0-\crovec\cdot\m{f}&=0,\\
\partial_{x_0}\m{f}+\crovec f_0&=0,\\
\crovec\times \m{f}&=0,
\end{aligned}
\end{equation}
or componentwise
\begin{equation}\label{eq:rieszoctcomp}
\begin{aligned}
\partial_{x_0}f_0-\partial_{x_1}f_1-\ldots-\partial_{x_7}f_7&=0,\\
\partial_{x_0}f_i+\partial_{x_i}f_0&=0,\quad i=1,\ldots,7,\\
\parder{2}{3}-\parder{3}{2}
+\parder{4}{5}-\parder{5}{4}
-\parder{6}{7}+\parder{7}{6}&=0,\\
-\parder{1}{3}+\parder{3}{1}
+\parder{4}{6}-\parder{6}{4}
+\parder{5}{7}-\parder{7}{5}&=0,\\
\parder{1}{2}-\parder{2}{1}
+\parder{4}{7}-\parder{7}{4}
-\parder{5}{6}+\parder{6}{5}&=0,\\
-\parder{1}{5}+\parder{5}{1}
-\parder{2}{6}+\parder{6}{2}
-\parder{3}{7}+\parder{7}{3}&=0,\\
\parder{1}{4}-\parder{4}{1}
-\parder{2}{7}+\parder{7}{2}
+\parder{3}{6}-\parder{6}{3}&=0,\\
\parder{1}{7}-\parder{7}{1}
+\parder{2}{4}-\parder{4}{2}
-\parder{3}{5}+\parder{5}{3}&=0,\\
-\parder{1}{6}+\parder{6}{1}
+\parder{2}{5}-\parder{5}{2}
+\parder{3}{4}-\parder{4}{3}&=0.
\end{aligned}
\end{equation}
\end{prop}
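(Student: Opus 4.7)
The plan is to read off the proposition essentially for free from the two decompositions \eqref{eq:dxfdecomp} and \eqref{eq:fdxdecomp}, exactly as Proposition \ref{prop:mtregular} was read off from \eqref{eq:dxfdecomp}. The three pieces $\partial_{x_0}f_0-\crovec\cdot\m{f}$, $\partial_{x_0}\m{f}+\crovec f_0$, and $\crovec\times\m{f}$ lie in the mutually complementary subspaces $\R$, $\operatorname{span}\{e_1,\ldots,e_7\}$, $\operatorname{span}\{e_1,\ldots,e_7\}$ of $\OO$, and the only place where $\cro f$ and $f\cro$ differ is the sign in front of $\crovec\times\m{f}$. So the passage from the pair $(\cro f, f\cro)$ to the triple (scalar part, non-curl vector part, curl) is just a change of basis in an $8{+}8$-dimensional system of equations.

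Concretely, I would first prove the ``if'' direction: if \eqref{eq:rieszoct} holds, substituting directly into \eqref{eq:dxfdecomp} and \eqref{eq:fdxdecomp} gives $\cro f=0$ and $f\cro=0$ term by term. For the ``only if'' direction, assume $\cro f=f\cro=0$. Adding \eqref{eq:dxfdecomp} and \eqref{eq:fdxdecomp} yields
\begin{equation*}
2(\partial_{x_0}f_0-\crovec\cdot\m{f}) + 2(\partial_{x_0}\m{f}+\crovec f_0)=0,
\end{equation*}
and subtracting yields $2\,\crovec\times\m{f}=0$. Comparing the scalar and vector components of the first identity separates it into the divergence equation and the conformal-Killing-like equation $\partial_{x_0}\m{f}+\crovec f_0=0$, which together with $\crovec\times\m{f}=0$ is precisely \eqref{eq:rieszoct}.

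For the equivalence with \eqref{eq:rieszoctcomp}, the first line is $\crovec\cdot\m{f}=0$ written out, the seven lines $\partial_{x_0}f_i+\partial_{x_i}f_0=0$ are the components of $\partial_{x_0}\m{f}+\crovec f_0=0$, and the remaining seven lines are the seven components of $\crovec\times\m{f}=0$, whose coefficients are read off from the octonion multiplication table in exactly the way the corresponding rows of \eqref{eq:mtsystemcomp} were obtained in Proposition \ref{prop:mtregular}. In fact, the last seven rows of \eqref{eq:rieszoctcomp} are literally the last seven rows of \eqref{eq:mtsystemcomp} with the terms $\parder{0}{k}+\parder{k}{0}$ removed, because those have been segregated into the middle block by the split $\partial_{x_0}\m{f}+\crovec f_0+\crovec\times\m{f}$.

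The only possible obstacle is sign-bookkeeping for the seven components of $\crovec\times\m{f}$, but since those signs are already settled by Proposition \ref{prop:mtregular} this step requires nothing new; the argument is otherwise a one-line consequence of the two decompositions of the octonion product.
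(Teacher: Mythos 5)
Your proof is correct and follows exactly the route the paper intends: the paper states Proposition \ref{prop:rregular} as an immediate consequence of comparing real and vector parts in \eqref{eq:dxfdecomp} and \eqref{eq:fdxdecomp}, and your add/subtract argument (which isolates $\crovec\times\m{f}$ as the only term changing sign between $\cro f$ and $f\cro$) is precisely that comparison made explicit. The componentwise identification with the rows of \eqref{eq:mtsystemcomp} is also as the paper does it, so there is nothing to add.
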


We will call functions satisfying \eqref{eq:rieszoctcomp} \emph{B-regular}, and denote the space of such functions by $\mathcal{M}_B$.  Naturally
\[
\mathcal{M}_B=\mathcal{M}^{(\ell)}\cap \mathcal{M}^{(r)}. 
\]

The fundamental difference between octonion and Clifford analyses is that in Clifford analysis the paravector valued null solutions to the Cauchy-Riemann operator satisfies the Riesz system and are at the same time left and right monogenic, which is not true in octonion analysis. The following well-known proposition follows from the definitions similarly as in octonion analysis by comparing the scalar parts, $1$-vector parts, and $2$-vector parts.

\begin{prop}\label{prop:kerdcl}
Suppose $f\colon\mathbb{R}^8\to\Cl_{0,7}$ is a paravector valued function. Then $\partial_xf=0$ if and only if $f\partial_x=0$, and this is equivalent to $f$ satisfying the \emph{Riesz-system}
\begin{equation}\label{eq:riesz}
\begin{aligned}
\partial_{x_0} f_0-\partial_{\m{x}}\cdot \m{f}&=0,\\
\partial_{x_0}\m{f}+\partial_{\m{x}}f_0&=0,\\
\partial_{\m{x}}\wedge\m{f}&=0,
\end{aligned}
\end{equation}
or componentwise
\begin{equation}\label{eq:rieszcomp}
\begin{aligned}
\partial_{x_0}f_0-\partial_{x_1}f_1-\ldots-\partial_{x_7}f_7&=0,\\
\partial_{x_0}f_i+\partial_{x_i}f_0&=0,\quad i=1,\ldots,7,\\
\partial_{x_i}f_j-\partial_{x_j}f_i&=0,\quad i,j=1,\ldots,7,\ i\ne j,
\end{aligned}
\end{equation}
\end{prop}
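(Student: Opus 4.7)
The plan is to directly mimic the calculation that produced \eqref{eq:dxfdecomp}--\eqref{eq:fdxdecomp}, but using the Clifford paravector product formula \eqref{eq:clparavect} in place of the octonion product formula \eqref{eq:octproddecomp}. Formally substituting $x_0\mapsto\partial_{x_0}$, $\m{x}\mapsto\partial_{\m{x}}$ and $y_0\mapsto f_0$, $\m{y}\mapsto\m{f}$ into \eqref{eq:clparavect} yields
\begin{equation*}
\partial_x f=\partial_{x_0}f_0-\partial_{\m{x}}\cdot\m{f}+\partial_{x_0}\m{f}+\partial_{\m{x}}f_0+\partial_{\m{x}}\wedge\m{f},
\end{equation*}
and the mirror-image computation (with the operator acting from the right, so the roles of $\partial_{\m{x}}$ and $\m{f}$ in the wedge are swapped) gives
\begin{equation*}
f\partial_x=\partial_{x_0}f_0-\partial_{\m{x}}\cdot\m{f}+\partial_{x_0}\m{f}+\partial_{\m{x}}f_0-\partial_{\m{x}}\wedge\m{f},
\end{equation*}
since the dot product is symmetric while the wedge of two vectors is antisymmetric.

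Next I would invoke the grade decomposition of $\Cl_{0,7}$: the scalar part $[\cdot]_0$, the $1$-vector part $[\cdot]_1$, and the $2$-vector part $[\cdot]_2$ live in linearly independent subspaces, so a Clifford element vanishes iff each homogeneous component vanishes separately. Reading off the parts in the two displays above, $\partial_x f=0$ is equivalent to the three equations
\begin{equation*}
\partial_{x_0}f_0-\partial_{\m{x}}\cdot\m{f}=0,\qquad \partial_{x_0}\m{f}+\partial_{\m{x}}f_0=0,\qquad \partial_{\m{x}}\wedge\m{f}=0,
\end{equation*}
which is precisely the Riesz system \eqref{eq:riesz}. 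The condition $f\partial_x=0$ produces the same three equations, since flipping the sign of a term that must be $0$ makes no difference. This simultaneously proves the equivalence $\partial_x f=0\Leftrightarrow f\partial_x=0$ and the equivalence with the Riesz system.

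Finally, to obtain the componentwise form \eqref{eq:rieszcomp}, I would expand each of the three vector-style equations in coordinates: the scalar equation gives $\partial_{x_0}f_0=\sum_{i=1}^7\partial_{x_i}f_i$; the $1$-vector equation, matched coefficient by coefficient, gives $\partial_{x_0}f_i+\partial_{x_i}f_0=0$ for $i=1,\dots,7$; and the $2$-vector equation, using $\partial_{\m{x}}\wedge\m{f}=\sum_{1\le i<j\le 7}(\partial_{x_i}f_j-\partial_{x_j}f_i)e_ie_j$, gives $\partial_{x_i}f_j-\partial_{x_j}f_i=0$ for all $i\ne j$. No step here poses a real obstacle; the only thing worth being careful about is that the paravector hypothesis on $f$ is what prevents higher-grade contributions from the action of $\partial_{\m{x}}$ and is what makes the clean decomposition above valid.
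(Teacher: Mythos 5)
Your proposal is correct and follows exactly the route the paper intends: the paper omits a written proof, saying only that the result ``follows from the definitions similarly as in octonion analysis by comparing the scalar parts, $1$-vector parts, and $2$-vector parts,'' which is precisely the grade-decomposition argument you carry out via \eqref{eq:clparavect}. Your sign analysis for $f\partial_x$ versus $\partial_x f$ and the coordinate expansion of the wedge term are both accurate, so nothing is missing.
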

Functions satisfying \eqref{eq:rieszcomp} are called \emph{R-regular}, and the space of such functions is denoted by $\mathcal{M}_R$.

% \begin{ex}
% Define $f\colon\OO\to\OO$, $f(x)=x_2e_1-x_7e_4$. Then $\cro f=0$. Since $f\circ e_1=-x_2+x_7e_5$, we have $\cro(f\circ e_1)=-2e_2\ne0$. In particular, if $a\in\OO$ is a constant, $\cro f=0$ does not imply $\cro(f\circ a)=0$.
% \end{ex}

% \todo{Ei liity varsinaisesti tähän, laitoinpahan muistiin.}
% \begin{lem}[Scalar triple product]
% Let $\m{x}$, $\m{y}$, and $\m{z}\in\OO$ be vectors. Then
% \[
% \m{x}\cdot(\m{y}\times\m{z})
% =\m{y}\cdot(\m{z}\times\m{x})
% =\m{z}\cdot(\m{x}\times\m{y}).
% \]
% \end{lem}

%\begin{align}
%\cro f=\partial_{x_0}f_0-\crovec\cdot\m{f}+\partial_{x_0}\m{f}+\crovec f_0+\crovec\times \m{f}
%\end{align}

To convince the reader about the existence of
these function classes, we recall the following classical method from Clifford analysis.

\begin{rem}[Cauchy--Kovalevskaya extension]
If $f\colon\Omega\to\mathbb{R}$ is a real analytic function defined on an open
%\todo{$\{x\in\OO :  x_0=0\}$?}
$\Omega\subset\mathbb{R}^7\cong\mathbb{O}\cap\{ x_0=0\}$ we may construct its Cauchy-Kovalevskaya extension analogously to Clifford analysis (see \cite{DSS}) by defining
\[
\text{CK}[f](x)=e^{-x_0 D_{\m{x}}}f(\m{x}).
\]
It is easy to see that since $f$ is real valued, $D_x\text{CK}[f]=\text{CK}[f]D_x=0$, i.e., $\text{CK}[f]\in\mathcal{M}_B$. Since $\mathbb{O}$ is an alternative division algebra, that is $x(xy)=x^2y$ for all $x,y\in\mathbb{O}$, the Cauchy--Kovalevskaya extension may be extended to octonion valued real analytic functions.  A necessary  condition for $\text{CK}[f]\in\mathcal{M}^{(\ell)}$ is that $f$ is an octonion valued real analytic function with $\m{f}\neq 0$. It is not a sufficient condition since, e.g.,
\[
\text{CK}[\m{x}](x)=7x_0+\m{x}
\]
belongs to $\mathcal{M}_{B}$.
\end{rem}

%We may conclude, that although Clifford and octonion analyses have formally similar character, the corresponding function spaces of solutions are completely different.

We may conclude that although Clifford and octonion analyses have formally very similar definitions, the corresponding function spaces are different.

% In the preceding section, we essentially characterized different types of function classes of functions $f\colon\mathbb{R}^8\to \mathbb{R}^8$, $f=(f_0,f_1,...,f_7)$. Depending on an algebra, associated to the basis elements $\{ e_0,e_1,...,e_7\}$, we obtain different function classes. In other words, a function class is determined by choosing an action $\mu(e_j)$ in the Cauchy-Riemann operator
% \[
% D_\mu =\sum_{j=0}^7 \mu(e_j)\frac{\partial}{\partial x_j}
% \]
% acting on paravector valued functions $f=f_0+e_1f_1+...+e_7f_7$. In the above section, we defined two actions, namely the octonion action $\mu_{o}(e_j)e_i:=e_j\circ e_i$ and the Clifford action $\mu_{cl}(e_j)e_i:=e_je_i$. Then
% \[
% D_o:=D_{\mu_{o}}=\cro\ \text{ and }\ D_{cl}:=D_{\mu_{cl}}=\partial_x.
% \]
% Denote
% \begin{align*}
% %\Ker D_\mu&=\{f\in C^1(\R^8,\R^8) : D_\mu f=fD_\mu=0\}\\
% \Ker_L D_\mu&=\{f\in C^1(\R^8,\R^8) : D_\mu f=0\}\\
% \Ker_R D_\mu&=\{f\in C^1(\R^8,\R^8) : fD_\mu=0\}
% \end{align*}
% If $\Ker_L D_\mu=\Ker_R D_\mu$, then we denote $\Ker D_\mu=\Ker_L D_\mu=\Ker_R D_\mu$.
%We obtain the following characterizations of the function classes $\mathcal{M}_{MT}$ and $\mathcal{M}_R$.

% \begin{prop}
% \hspace{1pt}
% \begin{enumerate}[(a)]
%   \item $\mathcal{M}_{B}=\Ker_L{D_{o}}\cap \Ker_R{D_{o}}$,
%   \item $\mathcal{M}_{MT}=\Ker_L{D_{o}}$,
%   \item $\mathcal{M}_R\subsetneq\mathcal{M}_B\subsetneq\mathcal{M}_{MT}$.
% \end{enumerate}
% \end{prop}

\begin{prop}
$\mathcal{M}_R\subsetneq\mathcal{M}_B\subsetneq\mathcal{M}^{(\ell)}$
\end{prop}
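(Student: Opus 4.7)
The plan is to establish two set-theoretic inclusions and then exhibit one explicit witness for each strict containment.

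The two inclusions are cheap. The containment $\mathcal{M}_B\subseteq\mathcal{M}^{(\ell)}$ is definitional, since the authors set $\mathcal{M}_B=\mathcal{M}^{(\ell)}\cap\mathcal{M}^{(r)}$. For $\mathcal{M}_R\subseteq\mathcal{M}_B$ I would align the componentwise systems \eqref{eq:rieszcomp} and \eqref{eq:rieszoctcomp}: their first two blocks coincide verbatim (the divergence line, and the seven equations $\partial_{x_0}f_i+\partial_{x_i}f_0=0$), so only the seven cross-product rows require attention. Each such row is a signed sum of three terms of the form $\partial_{x_i}f_j-\partial_{x_j}f_i$. R-regularity forces every one of the $21$ differences $\partial_{x_i}f_j-\partial_{x_j}f_i$ with $1\le i<j\le 7$ to vanish, so each cross-product row is automatically zero.

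For the strict containments I would lean on the identity $\cro f-f\cro=2\crovec\times\m{f}$, which drops out by subtracting \eqref{eq:fdxdecomp} from \eqref{eq:dxfdecomp}; it shows that left regular functions outside $\mathcal{M}_B$ are exactly those with $\crovec\times\m{f}\ne 0$. A polynomial witness for $\mathcal{M}^{(\ell)}\setminus\mathcal{M}_B$ is $f=x_2 e_1+x_0 e_3$: straight from the octonion multiplication table, $\cro f=e_3+e_2\circ e_1=e_3-e_3=0$ while $f\cro=e_3+e_1\circ e_2=2e_3\ne 0$. For $\mathcal{M}_R\subsetneq\mathcal{M}_B$ I would look for a purely spatial, divergence-free vector field $\m{f}$ with $\crovec\times\m{f}=0$ but some difference $\partial_{x_i}f_j-\partial_{x_j}f_i$ nonzero. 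The linear field $\m{f}=x_4 e_2+x_3 e_5+2x_1 e_7$ does the job: only the $e_6$-row of the cross-product block of \eqref{eq:rieszoctcomp} receives nonzero contributions, and they cancel as $2-1-1=0$, while $\partial_{x_4}f_2-\partial_{x_2}f_4=1$ rules out R-regularity.

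The real substance of the proposition is this second strict containment, and the obstacle is structural rather than technical. The seven cross-product equations provide only $7$ linear conditions on the $21$-dimensional space of antisymmetric derivatives $\partial_{x_i}f_j-\partial_{x_j}f_i$, leaving a $14$-dimensional ``slack'' that R-regularity would extinguish. The craft in choosing the witness is to confine the nontrivial cancellation to a single row of \eqref{eq:rieszoctcomp}, so that the other six rows remain manifestly zero and the check is short.
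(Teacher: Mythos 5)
Your proposal is correct and follows essentially the same route as the paper: the inclusions are read off from the componentwise characterizations, and strictness is shown by explicit linear polynomial witnesses. Your witnesses ($x_2e_1+x_0e_3$ and $x_4e_2+x_3e_5+2x_1e_7$) differ from the paper's ($x_1-x_2e_3$ and $x_2e_1-x_7e_4$) but both check out against the multiplication table and the system \eqref{eq:rieszoctcomp}.
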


\begin{proof}
The inclusions follow from Propositions \ref{prop:mtregular}--\ref{prop:kerdcl}. The examples showing that the inclusions are strict: if $f=x_2e_1-x_7e_4$, then $\cro f=f\cro=0$, but $\partial_{x_2}f_1-\partial_{x_1}f_2=1\ne0$, and if $f=x_1-x_2e_3$, then $\cro f=0$, but $f\cro=2e_1\ne0$.
\end{proof}

This result is crucial in understanding the fundamental character of octonion analysis and
the structural differences between octonion, quaternion, and Clifford analyses.

\begin{rem}[Quaternion analysis]
If we make the corresponding definitions for quaternion regular function classes by considering the Cauchy-Riemann operator $D_x=\partial_{x_0}+e_1\circ\partial_{x_1}+e_2\circ\partial_{x_2}+e_3\circ\partial_{x_3}$ acting on quaternion valued functions $f=f_1+f_1e_1+f_2e_2+f_3e_3$, then by comparing
\eqref{eq:rieszoctcomp} and \eqref{eq:rieszcomp} we observe immediately, that
\[
\mathcal{M}_R=\mathcal{M}_B\subsetneq\mathcal{M}^{(\ell)}.
\]
\end{rem}

\begin{rem}[Clifford analysis]
If we make the corresponding definitions for paravector valued monogenic functions, then
by Proposition \ref{prop:kerdcl},
\[
\mathcal{M}_R=\mathcal{M}_B=\mathcal{M}^{(\ell)}.
\]
\end{rem}

%The preceding two remarks tells us the structural difference between octonion, quaternion and Clifford analysis

\section{Characterizing function classes in biaxial quaternion analysis}
In the preceding section we gave characterizations for left-, $B$-, and $R$-regular functions using componentwise and vector forms. In this section we write the three  systems of Section \ref{sec:MTBR} in quaternion forms. The use of the quaternion forms of the function and the Cauchy--Riemann operator is called the \emph{biaxial quaternion analysis}.

Write the Cauchy--Riemann operator $\cro$ and the function $f\colon\R^8\to\OO$ in the quaternion forms:
\begin{align*}
\cro&=\partial_{u_0}+\partial_{\m{u}}+(\partial_{v_0}+\partial_{\m{v}})\circ e_4,\\
f&=g_0+\m{g}+(h_0+\m{h})\circ e_4.
\end{align*}
According to Corollary \ref{cor:octprodqua}, we can write
\begin{align*}
\cro f=&\partial_{u_0}g_0-\partial_{v_0}h_0-\partial_{\m{u}}\cdot\m{g}-\partial_{\m{v}}\cdot\m{h}\\
&+\partial_{u_0}\m{g}+\partial_{\m{u}}g_0+\partial_{v_0}\m{h}-\partial_{\m{v}}h_0+\partial_{\m{u}}\times\m{g}-\partial_{\m{v}}\times\m{h}\\
&+(\partial_{u_0}h_0+\partial_{v_0}g_0+\partial_{\m{v}}\cdot\m{g}-\partial_{\m{u}}\cdot\m{h})e_4\\
&+(\partial_{u_0}\m{h}+\partial_{\m{v}}g_0+\partial_{\m{u}}h_0-\partial_{v_0}\m{g}-\partial_{\m{u}}\times\m{h}-\partial_{\m{v}}\times\m{g})\circ e_4.
\end{align*}

This implies the quaternion forms of the Moisil--Teodorescu type system \eqref{eq:mtsystem} and the system \eqref{eq:rieszoct}.

\begin{prop}\label{prop:lmquat}
$f\colon\R^8\to\OO$ is left regular if and only if it satisfies the system
\begin{equation}
\begin{aligned}
\partial_{u_0}h_0+\partial_{v_0}g_0+\partial_{\m{v}}\cdot\m{g}-\partial_{\m{u}}\cdot\m{h}&=0,\\
\partial_{u_0}g_0-\partial_{v_0}h_0-\partial_{\m{u}}\cdot\m{g}-\partial_{\m{v}}\cdot\m{h}&=0,\\
\partial_{u_0}\m{g}+\partial_{\m{u}}g_0+\partial_{v_0}\m{h}-\partial_{\m{v}}h_0+\partial_{\m{u}}\times\m{g}-\partial_{\m{v}}\times\m{h}&=0,\\
\partial_{u_0}\m{h}+\partial_{\m{v}}g_0+\partial_{\m{u}}h_0-\partial_{v_0}\m{g}-\partial_{\m{u}}\times\m{h}-\partial_{\m{v}}\times\m{g}&=0.
\end{aligned}
\end{equation}
\end{prop}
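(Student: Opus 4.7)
The plan is to read the proposition directly off the quaternion-form expansion of $\cro f$ that was just derived (immediately above the proposition) using Corollary \ref{cor:octprodqua}. By definition, $f$ is left regular if and only if $\cro f = 0$, so all that remains is to argue that each of the four summands in that expansion must vanish independently.

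First I would observe the direct sum decomposition
\[
\OO = \R \oplus \operatorname{span}\{e_1,e_2,e_3\} \oplus \operatorname{span}\{e_4\} \oplus \operatorname{span}\{e_5,e_6,e_7\},
\]
noting that $\operatorname{span}\{e_5,e_6,e_7\} = \{\m{w}\circ e_4 : \m{w}\in\operatorname{span}\{e_1,e_2,e_3\}\}$ by the multiplication table (e.g.\ $e_1\circ e_4 = e_5$, $e_2\circ e_4 = e_6$, $e_3\circ e_4 = e_7$). The expression for $\cro f$ derived just before the proposition is already presented as a sum of four terms lying in these four respective subspaces, matching the block structure of Corollary \ref{cor:octprodqua}.

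Next I would invoke the direct sum to conclude that $\cro f = 0$ holds if and only if each of the four components vanishes. Reading them off in the order the proposition lists them (the $e_4$-component first, then the real, then the $\operatorname{span}\{e_1,e_2,e_3\}$ part, then the $\operatorname{span}\{e_5,e_6,e_7\}$ part) yields exactly the four equations in the stated system.

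There is essentially no obstacle here: the heavy lifting was done in Lemma \ref{lem:crossquaform} and Corollary \ref{cor:octprodqua}, and the pre-statement computation of $\cro f$ already packaged the result in the required form. The only technical point worth mentioning explicitly in the proof is the independence of the four subspaces, which justifies splitting the single vanishing condition $\cro f=0$ into the four scalar/vector equations of the proposition.
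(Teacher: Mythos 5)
Your proposal is correct and matches the paper's own (essentially immediate) argument: the paper derives the quaternion-form expansion of $\cro f$ from Corollary \ref{cor:octprodqua} right before the proposition and simply reads off the four equations, exactly as you do. Your explicit remark that the four subspaces $\R$, $\operatorname{span}\{e_1,e_2,e_3\}$, $\operatorname{span}\{e_4\}$, $\operatorname{span}\{e_5,e_6,e_7\}$ form a direct sum (so that $\cro f=0$ splits into four independent conditions) is the only step the paper leaves implicit, and you handle it correctly.
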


\begin{prop}\label{prop:lrmquat}
$f\colon\R^8\to\OO$ is left and right regular if and only if it satisfies the system
\begin{equation}
\begin{aligned}
\partial_{u_0}h_0+\partial_{v_0}g_0+\partial_{\m{v}}\cdot\m{g}-\partial_{\m{u}}\cdot\m{h}&=0,\\
\partial_{u_0}g_0-\partial_{v_0}h_0-\partial_{\m{u}}\cdot\m{g}-\partial_{\m{v}}\cdot\m{h}&=0,\\
\partial_{u_0}\m{g}+\partial_{\m{u}}g_0+\partial_{v_0}\m{h}-\partial_{\m{v}}h_0&=0,\\
\partial_{u_0}\m{h}+\partial_{\m{v}}g_0+\partial_{\m{u}}h_0-\partial_{v_0}\m{g}&=0,\\
\partial_{\m{u}}\times\m{g}-\partial_{\m{v}}\times\m{h}&=0,\\
\partial_{\m{u}}\times\m{h}+\partial_{\m{v}}\times\m{g}&=0.
\end{aligned}
\end{equation}
\end{prop}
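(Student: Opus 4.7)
The plan is to obtain Proposition \ref{prop:lrmquat} directly from Proposition \ref{prop:lmquat} together with its right-action analog, since being both left and right regular is the conjunction of $\cro f=0$ and $f\cro=0$. By \eqref{eq:dxfdecomp} and \eqref{eq:fdxdecomp}, the only difference between the octonion decompositions of $\cro f$ and $f\cro$ is the sign of the cross-product term $\crovec\times\m{f}$.

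I would first write down the quaternion-form decomposition of $f\cro$ by applying the computation preceding Proposition \ref{prop:lmquat} verbatim, but with every contribution originating from $\crovec\times\m{f}$ negated. By Lemma \ref{lem:crossquaform}, these contributions are, block by block: $(\partial_{\m{v}}\cdot\m{g}-\partial_{\m{u}}\cdot\m{h})e_4$ in the span$\{e_4\}$ block, $\partial_{v_0}\m{h}-\partial_{\m{v}}h_0+\partial_{\m{u}}\times\m{g}-\partial_{\m{v}}\times\m{h}$ in the span$\{e_1,e_2,e_3\}$ block, and $(\partial_{\m{u}}h_0-\partial_{v_0}\m{g}-\partial_{\m{u}}\times\m{h}-\partial_{\m{v}}\times\m{g})\circ e_4$ in the span$\{e_5,e_6,e_7\}$ block. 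Consequently the four componentwise equations for $f\cro=0$ mirror those of Proposition \ref{prop:lmquat} with exactly those summands flipped in sign, while the scalar parts of $\cro f$ and $f\cro$ coincide.

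Combining, the system $\cro f=0\wedge f\cro=0$ is equivalent to the simultaneous vanishing of the sum and difference of each pair of mirrored equations. Taking sums eliminates the cross-product contributions and reproduces the first four equations of the statement; taking differences retains only the cross-product contributions, yielding the last two together with an $e_4$-valued dot-product identity that merges with the sum-type $e_4$ equation to give the first equation exactly as written. The main obstacle is careful sign tracking in Lemma \ref{lem:crossquae4}, particularly in the span$\{e_5,e_6,e_7\}$ block where the formula $(\m{v}\circ e_4)\times\m{a}=-(\m{v}\times\m{a})\circ e_4+(\m{a}\cdot\m{v})e_4$ introduces an additional negation; once this is handled, the verification reduces to a direct componentwise comparison.
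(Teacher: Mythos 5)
Your overall strategy---expand $f\cro$ by negating the $\crovec\times\m{f}$ contribution in the displayed quaternion expansion of $\cro f$, and then replace the pair $\cro f=0$, $f\cro=0$ by the vanishing of their sum and difference---is essentially the paper's own (terse) derivation via Proposition \ref{prop:rregular}. The gap is in your final step, and it is a real one. You correctly record that the contribution of $\crovec\times\m{f}$ to the $\operatorname{span}\{e_1,e_2,e_3\}$ block is the \emph{whole} expression $\partial_{v_0}\m{h}-\partial_{\m{v}}h_0+\partial_{\m{u}}\times\m{g}-\partial_{\m{v}}\times\m{h}$, to $\operatorname{span}\{e_4\}$ the term $\partial_{\m{v}}\cdot\m{g}-\partial_{\m{u}}\cdot\m{h}$, and to $\operatorname{span}\{e_5,e_6,e_7\}$ the whole expression $\partial_{\m{u}}h_0-\partial_{v_0}\m{g}-\partial_{\m{u}}\times\m{h}-\partial_{\m{v}}\times\m{g}$. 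Taking sums therefore eliminates \emph{all} of these terms, not only the genuine cross products: the sums give $\partial_{u_0}\m{g}+\partial_{\m{u}}g_0=0$, $\partial_{u_0}h_0+\partial_{v_0}g_0=0$, $\partial_{u_0}\m{h}+\partial_{\m{v}}g_0=0$, while the differences give the three blocks of $\crovec\times\m{f}=0$ intact, e.g.\ $\partial_{v_0}\m{h}-\partial_{\m{v}}h_0+\partial_{\m{u}}\times\m{g}-\partial_{\m{v}}\times\m{h}=0$ rather than $\partial_{\m{u}}\times\m{g}-\partial_{\m{v}}\times\m{h}=0$. The resulting seven-equation system (the honest quaternion form of \eqref{eq:rieszoct}) does not coincide with the six equations in the statement, and the discrepancy is not cosmetic: the stated system is strictly weaker. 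Likewise, ``merging'' the two $e_4$-valued conditions $\partial_{u_0}h_0+\partial_{v_0}g_0=0$ and $\partial_{\m{v}}\cdot\m{g}-\partial_{\m{u}}\cdot\m{h}=0$ into their sum loses information.

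Concretely, take $f=x_0e_1-x_4e_5$, i.e.\ $g_0=h_0=0$, $\m{g}=x_0e_1$, $\m{h}=-x_4e_1$. All six displayed equations hold (in the third, $\partial_{u_0}\m{g}=e_1$ cancels against $\partial_{v_0}\m{h}=-e_1$, and every cross and dot product vanishes), yet $f\cro=e_1-e_5\circ e_4=2e_1\neq0$, so $f$ is left but not right regular. Hence the assertion ``taking sums reproduces the first four equations of the statement'' is false, and no rearrangement of your computation can prove the proposition as printed; carried out without the slip, your method instead produces the corrected system in which the third through sixth equations are replaced by $\partial_{u_0}\m{g}+\partial_{\m{u}}g_0=0$, $\partial_{u_0}\m{h}+\partial_{\m{v}}g_0=0$, $\partial_{v_0}\m{h}-\partial_{\m{v}}h_0+\partial_{\m{u}}\times\m{g}-\partial_{\m{v}}\times\m{h}=0$, $\partial_{\m{u}}h_0-\partial_{v_0}\m{g}-\partial_{\m{u}}\times\m{h}-\partial_{\m{v}}\times\m{g}=0$, and the first is split into $\partial_{u_0}h_0+\partial_{v_0}g_0=0$ and $\partial_{\m{v}}\cdot\m{g}-\partial_{\m{u}}\cdot\m{h}=0$.
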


One example of the use of the biaxial quaternion analysis is the proof of the following vector calculus identity in the octonionic case.

\begin{lem}
Let the coordinates of $f$ and $g:\mathbb{R}^8\to\OO$ have partial derivatives. Then
\[\crovec\cdot(\m{f}\times\m{g})=(\crovec\times\m{f})\cdot\m{g}-\m{f}\cdot(\crovec\times\m{g}).\]
\end{lem}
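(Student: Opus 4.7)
The plan is to reduce the identity to a purely algebraic property of the $7$-dimensional cross product $\times$ coming from octonion multiplication, namely the total antisymmetry of its structure tensor. Once this is granted, the identity becomes a formal consequence of the Leibniz rule, exactly as in the $\R^3$ case.

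Introduce structure constants $c_{ijk}:=(e_i\times e_j)\cdot e_k$ for $1\le i,j,k\le 7$, so that $(\m{f}\times\m{g})_k=\sum_{i,j=1}^{7} c_{ijk}f_i g_j$. Applying $\crovec\cdot$ and the Leibniz rule writes the left-hand side as
\begin{equation*}
\crovec\cdot(\m{f}\times\m{g})=\sum_{i,j,k=1}^{7}c_{ijk}\bigl[(\partial_{x_k}f_i)g_j+f_i(\partial_{x_k}g_j)\bigr].
\end{equation*}
Similarly, $(\crovec\times\m{f})\cdot\m{g}=\sum_{i,j,k}c_{ijk}(\partial_{x_i}f_j)g_k$ and $\m{f}\cdot(\crovec\times\m{g})=\sum_{i,j,k}c_{ijk}f_k(\partial_{x_i}g_j)$. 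A short relabeling of summation indices shows that the two derivative-on-$\m{f}$ sums agree provided $c_{ijk}=c_{kij}$ (cyclic invariance), and the two derivative-on-$\m{g}$ sums agree provided $c_{ijk}=-c_{kji}$ (antisymmetry swapping the outer indices). Both relations are consequences of the total antisymmetry of $c_{ijk}$.

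To close the argument I would verify that $c_{ijk}$ is totally antisymmetric. Antisymmetry in $i$ and $j$ is immediate from $\m{x}\times\m{y}=-\m{y}\times\m{x}$. For the remaining antisymmetry, I would use that for distinct indices $e_i\times e_j=e_i\circ e_j=\pm e_\ell$, and verify directly from the octonion multiplication table that the $3$-form $\Phi(\m{x},\m{y},\m{z})=(\m{x}\times\m{y})\cdot\m{z}$ is alternating. This is the classical Cayley $3$-form on $\operatorname{Im}(\OO)\cong\R^7$, and can equivalently be deduced from the alternativity of $\OO$.

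The main obstacle is really just this verification of the total antisymmetry of $c_{ijk}$; everything else is formal index bookkeeping via the Leibniz rule. As a less elegant backup I could expand both sides in biaxial quaternion form using Lemma~\ref{lem:crossquaform} and reduce the identity to a combination of the classical $\R^3$ divergence-of-cross-product identity and scalar product rules, but this route would involve substantially more manipulation.
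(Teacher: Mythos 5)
Your argument is correct, but it is a genuinely different route from the one the paper takes. The paper proves the identity by brute force in biaxial quaternion form: it decomposes $\crovec$, $\m{f}$, $\m{g}$ as $\partial_{\m{u}}+\partial_{v_0}e_4+\partial_{\m{v}}\circ e_4$ etc., applies Lemma~\ref{lem:crossquaform} to each of the three cross products, and matches the resulting terms using the classical $\R^3$ identity $\nabla\cdot(\m{u}\times\m{v})=(\nabla\times\m{u})\cdot\m{v}-\m{u}\cdot(\nabla\times\m{v})$ together with scalar product rules --- exactly the ``less elegant backup'' you mention. Your reduction to the total antisymmetry of $c_{ijk}=(e_i\times e_j)\cdot e_k$ is shorter, more conceptual, and makes clear that the identity holds for any bilinear cross product with alternating structure tensor; the index relabelings you need ($c_{ijk}=c_{kij}$ and $c_{ijk}=-c_{kji}$) do follow from total antisymmetry, and the Leibniz bookkeeping is sound. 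For the one step you flag as the obstacle, note that you need not recheck the multiplication table: the paper's own Lemma~\ref{lem:abW} gives $\m{a}\times\m{b}=-[(\m{a}\wedge\m{b})W]_1$ with $W$ a $3$-vector, and for $i\ne j$ one has $[(e_ie_j)e_{\nu_0}]_1=-\epsilon e_k$ where $\nu_0=\{i,j,k\}$ is the unique triplet of $W$ containing $ij$ and $\epsilon$ is the sign of the permutation ordering it as $(i,j,k)$; hence $c_{ijk}=\epsilon$, which is manifestly totally antisymmetric (and vanishes when $i,j,k$ do not form a triplet). Alternatively your appeal to the composition-algebra identity $\langle a\circ b,c\rangle=\langle b,\bar a\circ c\rangle$ gives the alternating property directly. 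Either way the proof closes; what the paper's computation buys instead is an explicit display of how the identity splits across the $\operatorname{span}\{e_1,e_2,e_3\}$, $\operatorname{span}\{e_4\}$, and $\operatorname{span}\{e_5,e_6,e_7\}$ components, which fits the biaxial theme of that section.
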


\begin{proof}
We use quaternion decompositions
\begin{align*}
\crovec&=\partial_{\m{u}}+\partial_{v_0}e_4+\partial_{\m{v}}\circ e_4,\\
\m{f}&=\m{f_1}+F_0e_4+\m{F_1}\circ e_4,\\
\m{g}&=\m{g_1}+G_0e_4+\m{G_1}\circ e_4.
\end{align*}
On the left-hand side we apply Lemma \ref{lem:crossquaform} to the cross product $\m{f}\times\m{g}$, and use the classical vector calculus identity
\[\nabla\cdot(\m{u}\times\m{v})=(\nabla\times\m{u})\cdot\m{v}-\m{u}\cdot(\nabla\times\m{v})\]
for $\m{u},\m{v}\colon\R^3\to\R^3$:
\begin{align*}
&\crovec\cdot(\m{f}\times\m{g})\\
&=\big(\partial_{\m{u}}+\partial_{v_0}e_4+\partial_{\m{v}}\circ e_4\big)\cdot\\
&\quad\big(F_0\m{G_1}-\m{F_1}G_0+\m{f_1}\times\m{g_1}-\m{F_1}\times\m{G_1}\\
&\quad+(\m{F_1}\cdot\m{g_1}-\m{f_1}\cdot\m{G_1})e_4\\
&\quad+(\m{f_1}G_0-F_0\m{g_1}-\m{f_1}\times\m{G_1}-\m{F_1}\times\m{g_1})\circ e_4\big)\\
&=
\partial_{\m{u}}\cdot(F_0\m{G_1})-\partial_{\m{u}}\cdot(\m{F_1}G_0)+\partial_{\m{u}}\cdot(\m{f_1}\times\m{g_1})-\partial_{\m{u}}\cdot(\m{F_1}\times\m{G_1})\\
&\quad+\partial_{v_0}(\m{F_1}\cdot\m{g_1})-\partial_{v_0}(\m{f_1}\cdot\m{G_1})\\
&\quad+\partial_{\m{v}}\cdot(\m{f_1}G_0)-\partial_{\m{v}}\cdot(F_0\m{g_1})-\partial_{\m{v}}\cdot(\m{f_1}\times\m{G_1})-\partial_{\m{v}}\cdot(\m{F_1}\times\m{g_1})\\
&=(\partial_{\m{u}} F_0)\cdot\m{G_1}+F_0(\partial_{\m{u}}\cdot\m{G_1})
-(\partial_{\m{u}}\cdot\m{F_1})G_0-\m{F_1}\cdot(\partial_{\m{u}}G_0)\\
&\quad+(\partial_{\m{u}}\times\m{f_1})\cdot\m{g_1}-\m{f_1}\cdot(\partial_{\m{u}}\times\m{g_1})
-(\partial_{\m{u}}\times\m{F_1})\cdot\m{G_1}+\m{F_1}\cdot(\partial_{\m{u}}\times\m{G_1})\\
&\quad+(\partial_{v_0}\m{F_1})\cdot\m{g_1}+\m{F_1}\cdot(\partial_{v_0}\m{g_1})
-(\partial_{v_0}\m{f_1})\cdot\m{G_1}-\m{f_1}\cdot(\partial_{v_0}\m{G_1})\\
&\quad+(\partial_{\m{v}}\cdot\m{f_1})G_0+\m{f_1}\cdot(\partial_{\m{v}}G_0)
-(\partial_{\m{v}}F_0)\cdot\m{g_1}-F_0(\partial_{\m{v}}\cdot\m{g_1})\\
&\quad-(\partial_{\m{v}}\times\m{f_1})\cdot\m{G_1}+\m{f_1}\cdot(\partial_{\m{v}}\times\m{G_1})
-(\partial_{\m{v}}\times\m{F_1})\cdot\m{g_1}+\m{F_1}\cdot(\partial_{\m{v}}\times\m{g_1}).
\end{align*}
On the right-hand side we apply Lemma \ref{lem:crossquaform} to the rotors $\crovec\times\m{f}$ and $\crovec\times\m{g}$:
\begin{align*}
&(\crovec\times\m{f})\cdot\m{g}\\
&=\big(\partial_{v_0}\m{F_1}-\partial_{\m{v}}F_0+\partial_{\m{u}}\times\m{f_1}-\partial_{\m{v}}\times\m{F_1}\\
&\quad+(\partial_{\m{v}}\cdot\m{f_1}-\partial_{\m{u}}\cdot\m{F_1})e_4\\
&\quad+(\partial_{\m{u}}F_0-\partial_{v_0}\m{f_1}-\partial_{\m{u}}\times\m{F_1}-\partial_{\m{v}}\times\m{f_1})\circ e_4\big)\cdot\\
&\quad\big(\m{g_1}+G_0e_4+\m{G_1}\circ e_4\big)\\
&=(\partial_{v_0}\m{F_1})\cdot\m{g_1}-(\partial_{\m{v}}F_0)\cdot\m{g_1}+(\partial_{\m{u}}\times\m{f_1})\cdot\m{g_1}-(\partial_{\m{v}}\times\m{F_1})\cdot\m{g_1}\\
&\quad+(\partial_{\m{v}}\cdot\m{f_1})G_0-(\partial_{\m{u}}\cdot\m{F_1})G_0\\
&\quad+(\partial_{\m{u}}F_0)\cdot\m{G_1}-(\partial_{v_0}\m{f_1})\cdot\m{G_1}-(\partial_{\m{u}}\times\m{F_1})\cdot\m{G_1}-(\partial_{\m{v}}\times\m{f_1})\cdot\m{G_1},
\end{align*}
and
\begin{align*}
&\m{f}\cdot(\crovec\times\m{g})\\
&=\big(\m{f_1}+F_0e_4+\m{F_1}\circ e_4\big)\cdot\\
&\quad\big(\partial_{v_0}\m{G_1}-\partial_{\m{v}}G_0+\partial_{\m{u}}\times\m{g_1}-\partial_{\m{v}}\times\m{G_1}\\
&\quad+(\partial_{\m{v}}\cdot\m{g_1}-\partial_{\m{u}}\cdot\m{G_1})e_4\\
&\quad+(\partial_{\m{u}}G_0-\partial_{v_0}\m{g_1}-\partial_{\m{u}}\times\m{G_1}-\partial_{\m{v}}\times\m{g_1})\circ e_4\big)\\
&=\m{f_1}\cdot(\partial_{v_0}\m{G_1})-\m{f_1}\cdot(\partial_{\m{v}}G_0)+\m{f_1}\cdot(\partial_{\m{u}}\times\m{g_1})-\m{f_1}\cdot(\partial_{\m{v}}\times\m{G_1})\\
&\quad+F_0(\partial_{\m{v}}\cdot\m{g_1})-F_0(\partial_{\m{u}}\cdot\m{G_1})\\
&\quad+\m{F_1}\cdot(\partial_{\m{u}}G_0)-\m{F_1}\cdot(\partial_{v_0}\m{g_1})-\m{F_1}\cdot(\partial_{\m{u}}\times\m{G_1})-\m{F_1}\cdot(\partial_{\m{v}}\times\m{g_1}).\qedhere
\end{align*}
\end{proof}

\begin{rem}[Regular functions is not a module]\label{rem:rfunnotmodule}
In quaternion analysis $\partial_ug=0$ implies $\partial_u(g\circ a)=0$ for all $a\in\HH$ (see Lemma \ref{lem:prodrulev}). The same does not hold in octonion analysis. For example, define $g\colon\HH\to\HH$, $g(x)=x_1-x_2e_3$. Then $\cro g=e_1-e_2e_3=0$, but $\cro(g\circ e_4)=\cro(x_1e_4-x_2e_7)=e_1e_4-e_2e_7=2e_5$. 
\end{rem}

For quaternion functions we have the product rules \eqref{eq:prodrulexv1} and \eqref{eq:prodrulexv2} for the Cauchy--Riemann operator.
Remark \ref{rem:rfunnotmodule} suggests that we do not have any kind of a non-trivial product rule for octonion valued
functions. To compute $D_x(fg)$ for octonion valued functions in practice, one way is to use biaxial quaternion analysis, and then to apply \eqref{eq:prodrulexv1}--\eqref{eq:prodrulexvvika}.

\begin{lem}{\cite[Thm 1.3.2]{GurlebeckSprozig}}
\label{lem:prodrulev}
Let the coordinates of $f$ and $g\colon\HH\to\HH$ have partial derivatives. 
%be vector-valued: $\m{f}=f_1e_1+f_2e_2+f_3e_3$ and $\m{g}=g_1e_1+g_2e_2+g_3e_3$.
Then
\begin{equation}\label{eq:prodrulexv1}
\partial_{\m{u}}(f\circ g)
=(\partial_{\m{u}}f)\circ g+\BAR{f}\circ(\partial_{\m{u}}g)-2(\m{f}\cdot\partial_{\m{u}})g
\end{equation}
and
\begin{equation}\label{eq:prodrulexv2}
(f\circ g)\partial_{\m{u}}
=(f\partial_{\m{u}})\circ\BAR{g}+f\circ(g\partial_{\m{u}})-2(\m{g}\cdot\partial_{\m{u}})f.
\end{equation}
Here, $(\m{f}\cdot\partial_{\m{u}})g=\sum\limits_{i=1}^3f_i\partial_{x_i}g$.
\end{lem}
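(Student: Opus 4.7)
The plan is to expand $\partial_{\m{u}}(f\circ g)=\sum_{i=1}^3 e_i\circ\partial_{u_i}(f\circ g)$ using the ordinary scalar Leibniz rule on each partial $\partial_{u_i}$, and then reorganize the result using the associativity of $\HH$. The first piece $\sum_i e_i\circ(\partial_{u_i}f)\circ g$ collapses to $(\partial_{\m{u}}f)\circ g$ immediately by associativity. The obstacle is the second piece $\sum_i e_i\circ f\circ(\partial_{u_i}g)$: since $e_i$ does not commute with $f$, we cannot simply pull $f$ past $e_i$ to recover $\BAR{f}\circ(\partial_{\m{u}}g)$, and the whole task reduces to a single commutation identity that moves $e_i$ through $f$ at the cost of a real correction.

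The key lemma I would first establish is that for any $f=f_0+\m{f}\in\HH$ and any $1\le i\le 3$,
\[
e_i\circ f=\BAR{f}\circ e_i-2f_i.
\]
This drops out of the vector identity $\m{a}\circ\m{b}=-\m{a}\cdot\m{b}+\m{a}\times\m{b}$: the cross-product contributions of $e_i\circ\m{f}$ and $-\m{f}\circ e_i$ agree, while the scalar parts $-f_i$ and $+f_i$ differ by exactly $2f_i$, and the $f_0 e_i$ terms match trivially. A mirror calculation gives $f\circ e_i=e_i\circ\BAR{f}-2f_i$, which will drive the second identity.

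Inserting the commutation identity into the obstacle term yields
\[
\sum_i e_i\circ f\circ(\partial_{u_i}g)=\sum_i\bigl(\BAR{f}\circ e_i-2f_i\bigr)\circ(\partial_{u_i}g)=\BAR{f}\circ(\partial_{\m{u}}g)-2(\m{f}\cdot\partial_{\m{u}})g,
\]
and combining with $\sum_i e_i\circ(\partial_{u_i}f)\circ g=(\partial_{\m{u}}f)\circ g$ gives \eqref{eq:prodrulexv1}. The proof of \eqref{eq:prodrulexv2} is entirely parallel: start from $(f\circ g)\partial_{\m{u}}=\sum_i\partial_{u_i}(f\circ g)\circ e_i$, use the Leibniz rule, and apply the mirror identity $g\circ e_i=e_i\circ\BAR{g}-2g_i$ in the analogous obstacle term $\sum_i(\partial_{u_i}f)\circ g\circ e_i$. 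Associativity of $\HH$ is used essentially in both collapses, which is precisely why the same argument fails for octonion-valued $f,g$ and thus why there is no clean product rule in that setting, as illustrated by Remark~\ref{rem:rfunnotmodule}.
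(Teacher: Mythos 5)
Your proof is correct. Note that the paper itself gives no proof of this lemma --- it is quoted directly from G\"urlebeck--Spr\"ossig \cite[Thm 1.3.2]{GurlebeckSprozig} --- so there is no in-paper argument to compare against; your derivation via the scalar Leibniz rule together with the commutation identity $e_i\circ f=\BAR{f}\circ e_i-2f_i$ (and its mirror $f\circ e_i=e_i\circ\BAR{f}-2f_i$) is the standard one, and every step, including the use of associativity of $\HH$ to regroup the triple products, checks out. Your closing observation that the regrouping is exactly what fails for octonion-valued functions is also consistent with Remark \ref{rem:rfunnotmodule}.
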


\begin{cor}
Let the coordinates of $f$ and $g\colon\HH\to\HH$ have partial derivatives. Then
\begin{align}
\partial_{\m{u}}((f\circ e_4)\circ g)
&=[(f\partial_{\m{u}})\circ g+f\circ(\BAR{g}\partial_{\m{u}})+2(\m{g}\cdot\partial_{\m{u}})f]\circ e_4\\
\partial_{\m{u}}(f\circ(g\circ e_4))
&=[(g\partial_{\m{u}})\circ\BAR{f}+g\circ(f\partial_{\m{u}})-2(\m{f}\cdot\partial_{\m{u}})g]\circ e_4\\
\partial_{\m{u}}((f\circ e_4)\circ(g\circ e_4))
&=-(\partial_{\m{u}}\BAR{g})\circ f-g\circ(\partial_{\m{u}}f)-2(\m{g}\cdot\partial_{\m{u}})f\\
(\partial_{\m{v}}\circ e_4)(f\circ g)
&=[(\partial_{\m{v}}\BAR{g})\circ\BAR{f}+g\circ(\partial_{\m{v}}\BAR{f})+2(\m{g}\cdot\partial_{\m{v}})\BAR{f}]\circ e_4\\
(\partial_{\m{v}}\circ e_4)((f\circ e_4)\circ g)
&=-(g\partial_{\m{v}})\circ f-g(\BAR{f}\partial_{\m{v}})-2(\m{f}\cdot\partial_{\m{v}})g\\
(\partial_{\m{v}}\circ e_4)(f\circ(g\circ e_4))
&=-(\BAR{f}\partial_{\m{v}})\circ g-\BAR{f}\circ(\BAR{g}\partial_{\m{v}})-2(\m{g}\cdot\partial_{\m{v}})\BAR{f}\\
(\partial_{\m{v}}\circ e_4)((f\circ e_4)\circ(g\circ e_4))
&=[-(\partial_{\m{v}}\BAR{f})g-f(\partial_{\m{v}}g)-2(\m{f}\cdot\partial_{\m{v}})g]\circ e_4\label{eq:prodrulexvvika}
\end{align}
\end{cor}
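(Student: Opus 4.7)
The plan is to reduce each of the seven identities to the quaternionic product rules \eqref{eq:prodrulexv1}--\eqref{eq:prodrulexv2} of Lemma \ref{lem:prodrulev}, by first invoking Lemma \ref{lem:quaoctprod} to rewrite every octonion product of two quaternionic ``$e_4$-blocks'' as a quaternion product multiplied by a single $e_4$ on one side. Once the argument of the differential operator is in this standard form, the derivative can be pushed past the $e_4$, and the classical product rule applies.

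First I would establish three auxiliary commutation identities for a quaternion-valued function $h\colon\HH\to\HH$. Expanding $\partial_{\m{u}}=\sum_{i=1}^{3}e_i\partial_{x_i}$ and applying the third identity of Lemma \ref{lem:quaoctprod} gives
\[
\partial_{\m{u}}(h\circ e_4)=(h\partial_{\m{u}})\circ e_4.
\]
Similarly, expanding $\partial_{\m{v}}\circ e_4=\sum_{i=1}^{3}(e_i\circ e_4)\partial_{x_{i+4}}$ and using the fifth and sixth identities of Lemma \ref{lem:quaoctprod} yields
\[
(\partial_{\m{v}}\circ e_4)h=(\partial_{\m{v}}\BAR{h})\circ e_4 \quad\text{and}\quad (\partial_{\m{v}}\circ e_4)(h\circ e_4)=-\BAR{h}\partial_{\m{v}}.
\]
These three identities account for every way the outer operator can meet an $e_4$-block.

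With those in hand, each of the seven formulas is produced by the same three-step recipe: (a) use Lemma \ref{lem:quaoctprod} to reduce the argument to either $h\circ e_4$ or a pure quaternion product; (b) apply the appropriate one of the commutation identities above to move the derivative past $e_4$; (c) apply \eqref{eq:prodrulexv1} or \eqref{eq:prodrulexv2} to the resulting quaternion product, substituting $f$ or $g$ by $\BAR{f}$ or $\BAR{g}$ as necessary. For instance, for the first identity we have $(f\circ e_4)\circ g=(f\circ\BAR{g})\circ e_4$, so step (a)-(b) give $\partial_{\m{u}}((f\circ e_4)\circ g)=((f\circ\BAR{g})\partial_{\m{u}})\circ e_4$, and \eqref{eq:prodrulexv2} with $g\mapsto\BAR{g}$ produces the stated right-hand side after using $\underline{\BAR{g}}=-\m{g}$ to flip the sign of the last term. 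The third identity instead uses $(f\circ e_4)\circ(g\circ e_4)=-\BAR{g}\circ f$ and a direct application of \eqref{eq:prodrulexv1}; the fourth-through-seventh identities follow identically but with the $\partial_{\m{v}}\circ e_4$ commutation identities.

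The main obstacle is purely bookkeeping. There are no new analytic ideas beyond Lemmas \ref{lem:quaoctprod} and \ref{lem:prodrulev}, but each of the seven identities uses a different combination of the six relations of Lemma \ref{lem:quaoctprod}, and signs must be tracked carefully through the conjugations, especially the repeated appearance of $\underline{\BAR{g}}=-\m{g}$ which is responsible for the alternating signs of the $\pm 2(\m{g}\cdot\partial)(\cdot)$ terms. Once the three auxiliary commutation identities are in place, the remainder is a mechanical expansion done case by case.
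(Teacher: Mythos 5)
Your proposal is correct and follows essentially the same route as the paper, whose entire proof reads: apply Lemmas \ref{lem:quaoctprod} and \ref{lem:prodrulev} and use $\BAR{f\circ g}=\BAR{g}\circ\BAR{f}$ --- your three auxiliary commutation identities and the sign-tracking via $\m{\BAR{g}}=-\m{g}$ are exactly the details that proof leaves implicit. One small slip: the identity $\partial_{\m{u}}(h\circ e_4)=(h\partial_{\m{u}})\circ e_4$ rests on the fourth relation $u\circ(v\circ e_4)=(v\circ u)\circ e_4$ of Lemma \ref{lem:quaoctprod}, not the third.
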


\begin{proof}
Apply Lemmas \ref{lem:quaoctprod} and \ref{lem:prodrulev}, and use the fact $\BAR{fg}=\BAR{g}\,\BAR{f}$.
\end{proof}

\section{Characterization of the function classes in Clifford analysis}
In this last section, we study
the classes of  left-, $B$-, and $R$-regular functions using Clifford analysis.
We begin with the following algebraic lemma.

\begin{lem}\label{lem:abIcl}
Let $I=I^-$ be the primitive idempotent \eqref{eq:primidef}, and let $a=a_0+\m{a}$ and $b=b_0+\m{b}\in\Cl_{0,7}$ be paravectors. Then
\begin{align}
16[abI]_0&=a_0b_0-\m{a}\cdot\m{b},\label{eq:16ideeka}\\
16[abI]_1&=a_0\m{b}+\m{a}b_0-[(\m{a}\wedge\m{b})W]_1,\label{eq:16idetoka}\\
16[abI]_2&=\m{a}\wedge\m{b}-[(a_0\m{b}+\m{a}b_0)W]_2+[(\m{a}\wedge\m{b})We_{1\cdots7}]_2,\\
16[abI]_3&=-(a_0b_0-\m{a}\cdot\m{b})W+[(a_0\m{b}+\m{a}b_0)We_{1\cdots7}]_3-[(\m{a}\wedge\m{b})W]_3,\\
16[abI]_4&=(a_0b_0-\m{a}\cdot\m{b})We_{1\cdots7}-[(a_0\m{b}+\m{a}b_0)W]_4+[(\m{a}\wedge\m{b})We_{1\cdots7}]_4,\\
16[abI]_5&=[(a_0\m{b}+\m{a}b_0)We_{1\cdots7}]_5-
[(\m{a}\wedge\m{b})W]_5-(\m{a}\wedge\m{b})e_{1\cdots7},\\
16[abI]_6&=-(a_0\m{b}+\m{a}b_0)e_{1\cdots7}+[(\m{a}\wedge\m{b})We_{1\cdots7}]_6,\\
16[abI]_7&=-(a_0b_0-\m{a}\cdot\m{b})e_{1\cdots7},\label{eq:16idevika}
\end{align}
and
\begin{equation}\label{eq:idesym}
[abI]_k=0\ \Leftrightarrow\ [abI]_{7-k}=0,\quad k=0,1,\ldots,7.
\end{equation}
% \begin{enumerate}[(a)]
% \item $[abI]_0=0\ \Leftrightarrow\ [abI]_7=0$,\label{item:idea}
% \item $[abI]_1=0\ \Leftrightarrow\ [abI]_6=0$,\label{item:ideb}
% \item $[abI]_2=0\ \Leftrightarrow\ [abI]_5=0$,\label{item:idec}
% \item $[abI]_3=0\ \Leftrightarrow\ [abI]_4=0$,\label{item:ided}
% \item If $[abI]_0=0$, then the conditions $[abI]_j=0$, $j=2,3,4,5$, are pairwise equivalent.\label{item:idee}
% \end{enumerate}
If $[abI]_0=0$, then the conditions $[abI]_j=0$, $j=2,3,4,5$, are pairwise equivalent. In particular, if $[abI]_{0,1,2}=0$, then $abI=0$.
\end{lem}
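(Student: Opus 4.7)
The overall strategy is to factor $16I^-$ conveniently, expand $ab \cdot 16I^-$ grade by grade, and then use the structural fact $\dim S^- = 8$ to extract the vanishing results. Set $\omega := e_{12\cdots7}$ for brevity; since $\Cl_{0,7}$ has odd dimension, $\omega$ is central and $\omega^2 = 1$, so
\[
16I^- = (1+W\omega)(1-\omega) = (1-W)(1-\omega),
\]
and hence $16\,abI^- = ab(1-W)(1-\omega)$. Writing $ab = s + v + B$ with $s = a_0b_0-\m{a}\cdot\m{b}$, $v = a_0\m{b}+\m{a}b_0$ and $B = \m{a}\wedge\m{b}$, a direct grade-wise expansion gives
\[
ab(1-W) = s + (v-[BW]_1) + (B-[vW]_2) + (-sW-[BW]_3) - [vW]_4 - [BW]_5,
\]
the six summands living respectively in grades $0,\dots,5$.

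For the eight formulas I would use the elementary identity $[X(1-\omega)]_k = [X]_k - [X]_{7-k}\,\omega$, valid for any $X\in\Cl_{0,7}$ because multiplication by $\omega$ sends grade $j$ to grade $7-j$. Applying it with $X = ab(1-W)$ and identifying e.g.\ $[BW]_5\omega = [BW\omega]_2$, $[vW]_4\omega = [vW\omega]_3$, and $sW\omega = sWe_{12\cdots7}$, reproduces \eqref{eq:16ideeka}--\eqref{eq:16idevika} after routine regrouping.

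The grade symmetry \eqref{eq:idesym} follows from $I^-\omega = -I^-$, itself a one-line calculation:
\[
I^-\omega = \tfrac{1}{16}(1+W\omega)(1-\omega)\omega = \tfrac{1}{16}(1+W\omega)(\omega-1) = -I^-.
\]
Hence $Y := abI^-$ satisfies $Y\omega = -Y$; comparing the grade-$(7-k)$ part on both sides gives $Y_k\omega = -Y_{7-k}$, and invertibility of $\omega$ forces $[abI]_k = 0 \iff [abI]_{7-k} = 0$.

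The main work is the pairwise equivalence of $[abI]_j = 0$ for $j \in \{2,3,4,5\}$ under $[abI]_0 = 0$; the final ``$[abI]_{0,1,2}=0 \Rightarrow abI=0$'' will then follow by combining with the grade symmetries just proved. My approach is structural. Because $\Cl_{0,7} \cong \mathbb{R}(8)\oplus\mathbb{R}(8)$, the minimal left ideal $S^- = \Cl_{0,7}I^-$ has real dimension $8$, and since a short computation gives $[e_j I^-]_1 = e_j/16$ and $[I^-]_0 = 1/16$, the set $\{e_0 I^-, e_1 I^-, \ldots, e_7 I^-\}$ is a basis of $S^-$. Writing $abI^- = \sum_{j=0}^{7} c_j e_j I^-$, the condition $[abI]_0 = 0$ reads $c_0 = 0$, so it suffices to show that for each $k \in \{2,3,4,5\}$ the vectors $\{[e_j I^-]_k\}_{j=1}^{7}$ are linearly independent. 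For $k=2$ one has $[e_j I^-]_2 = -\tfrac{1}{16}[e_j W]_2$, which is a bivector supported on the three pairs $\{a,b\}$ with $\{a,b,j\}$ a defining triplet of $W$; the Fano-plane property already exploited in the proof of Lemma~\ref{lem:abW} (every pair lies in exactly one triplet) makes these supports pairwise disjoint, yielding independence. For $k=3$ one has $[e_j I^-]_3 = \tfrac{1}{16}[e_j W]_4\,\omega$, and since any two Fano triplets meet in at most one index, no four-element subset of $\{1,\ldots,7\}$ contains two triplets, so $\{[e_j W]_4\}_{j=1}^7$ again have pairwise disjoint supports and are independent; multiplication by the invertible $\omega$ preserves this. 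The cases $k=4,5$ reduce to $k=3,2$ via the grade symmetry just proved. The principal obstacle in the whole proof is precisely this Fano-plane bookkeeping; once it is in place, each map $Y\mapsto[Y]_k$ has trivial kernel on $\{Y\in S^-:[Y]_0=0\}$, which gives the pairwise equivalences and the final implication.
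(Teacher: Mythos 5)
Your proposal is correct, and it splits naturally into a part that mirrors the paper and a part that genuinely departs from it. For the eight formulas \eqref{eq:16ideeka}--\eqref{eq:16idevika} and the symmetry \eqref{eq:idesym} you are doing essentially what the paper does -- expanding $ab$ into grades $0,1,2$ and $16I^-$ into grades $0,3,4,7$ and sorting by grade -- only packaged more systematically via the factorization $16I^-=(1-W)(1-e_{12\cdots7})$, the reflection identity $[X(1-e_{12\cdots7})]_k=[X]_k-[X]_{7-k}e_{12\cdots7}$, and $I^-e_{12\cdots7}=-I^-$; all of these check out. The real divergence is in the last claim. The paper proves the equivalence of $[abI]_2=0$ and $[abI]_3=0$ (given $[abI]_0=0$) by writing out both quantities explicitly as sums of seven coefficient combinations times seven multivectors and letting the reader observe that the coefficients coincide and the multivectors are independent. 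You instead argue structurally: $S^-=\Cl_{0,7}I^-$ is $8$-dimensional with basis $\{e_jI^-\}_{j=0}^7$ (which follows from primitivity of $I^-$ plus your computation of the paravector parts, consistent with Lemma \ref{lem:16abi}), so $abI^-=\sum_j c_je_jI^-$ with $[abI]_0=0$ iff $c_0=0$, and it suffices to show that $\{[e_jW]_2\}_{j=1}^7$ and $\{[e_jW]_4\}_{j=1}^7$ are linearly independent; your Fano-plane disjoint-support arguments for this are correct (each pair of indices lies in exactly one triplet, and two distinct triplets meet in exactly one index, so no $4$-set contains two triplets). This route is shorter, explains \emph{why} the equivalences hold (injectivity of the grade projections on the $7$-dimensional complement of $\mathbb{R}I^-$ in $S^-$), and yields the slightly stronger conclusion that each single condition $[abI]_j=0$, $j\in\{2,3,4,5\}$, together with $[abI]_0=0$ already forces $abI=0$. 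What it costs is the reliance on $\dim S^-=8$, i.e.\ on $I^-$ being a primitive idempotent of $\Cl_{0,7}\cong\mathbb{R}(8)\oplus\mathbb{R}(8)$ -- a fact the paper cites but does not prove -- whereas the paper's tables, though longer, are verifiable by bare-hands computation and double as explicit data one can reuse. Your coefficients $c_j$ are in fact exactly the components of $a\circ b$, and your multivectors $-[e_jW]_2$ are exactly the bivectors appearing in the paper's table, so the two proofs are the same computation viewed at different levels of abstraction.
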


\begin{proof}
Write the real part and $1$- and $2$-vector parts of $ab$ using \eqref{eq:clparavect}, and expand the definition \eqref{eq:primidef} of $I$ using the fact $e_{12\cdots7}^2=1$:
\begin{align*}
ab&=(a_0b_0-\m{a}\cdot\m{b})+(a_0\m{b}+\m{a}b_0)+\m{a}\wedge\m{b},\\
16I&=1-W+We_{12\cdots7}-e_{12\cdots7}.
\end{align*}
Here, $W$ is a $3$-vector and $We_{12\cdots7}$ is a $4$-vector. Then, for example, $\m{a}W$ only contains $2$- and $4$-vector parts, and therefore $[\m{a}W]_3=0$. This kind of reasoning implies \eqref{eq:16ideeka}--\eqref{eq:16idevika}.

Now, \eqref{eq:idesym} follows from the facts that for any $c\in\Cl_{0,7}$,
\begin{align*}
c=0\ &\Leftrightarrow\ ce_{12\cdots7}=0,\quad\text{and}\\
[c]_ke_{12\cdots7}&=[ce_{12\cdots7}]_{7-k},\quad k=0,1,\ldots,7.
\end{align*}
To prove the last claim, it is now enough to show that in the case $[abI]_0=0$, $[abI]_2=0$ if and only if $[abI]_3=0$. This can be seen by computing
\begin{align*}
16[abI]_2
=&(a_0b_1+a_1b_0+a_2b_3-a_3b_2+a_4b_5-a_5b_4-a_6b_7+a_7b_6)(e_{23}+e_{45}-e_{67})\\
+&(a_0b_2-a_1b_3+a_2b_0+a_3b_1+a_4b_6+a_5b_7-a_6b_4-a_7b_5)(-e_{13}+e_{46}+e_{57})\\
+&(a_0b_3+a_1b_2-a_2b_1+a_3b_0+a_4b_7-a_5b_6+a_6b_5-a_7b_4)(e_{12}+e_{47}-e_{56})\\
+&(a_0b_4-a_1b_5-a_2b_6-a_3b_7+a_4b_0+a_5b_1+a_6b_2+a_7b_3)(-e_{15}-e_{26}-e_{37})\\
+&(a_0b_5+a_1b_4-a_2b_7+a_3b_6-a_4b_1+a_5b_0-a_6b_3+a_7b_2)(e_{14}-e_{27}e_{36})\\
+&(a_0b_6+a_1b_7+a_2b_4-a_3b_5-a_4b_2+a_5b_3+a_6b_0-a_7b_1)(e_{17}+e_{24}-e_{35})\\
+&(a_0b_7-a_1b_6+a_2b_5+a_3b_4-a_4b_3-a_5b_2+a_6b_1+a_7b_0)(-e_{16}+e_{25}+e_{34}),
\end{align*}
and in the case $[abI]_0=0$,
\begin{align*}
16[abI]_3
=&(a_0b_1+a_1b_0+a_2b_3-a_3b_2+a_4b_5-a_5b_4-a_6b_7+a_7b_6)(e_{247}-e_{256}-e_{346}-e_{357})\\
+&(a_0b_2-a_1b_3+a_2b_0+a_3b_1+a_4b_6+a_5b_7-a_6b_4-a_7b_5)(-e_{147}+e_{156}+e_{345}-e_{367})\\
+&(a_0b_3+a_1b_2-a_2b_1+a_3b_0+a_4b_7-a_5b_6+a_6b_5-a_7b_4)(e_{146}+e_{157}-e_{245}+e_{267})\\
+&(a_0b_4-a_1b_5-a_2b_6-a_3b_7+a_4b_0+a_5b_1+a_6b_2+a_7b_3)(e_{127}-e_{136}+e_{235}-e_{567})\\
+&(a_0b_5+a_1b_4-a_2b_7+a_3b_6-a_4b_1+a_5b_0-a_6b_3+a_7b_2)(-e_{126}-e_{137}-e_{234}+e_{467})\\
+&(a_0b_6+a_1b_7+a_2b_4-a_3b_5-a_4b_2+a_5b_3+a_6b_0-a_7b_1)(e_{125}+e_{134}-e_{237}-e_{457})\\
+&(a_0b_7-a_1b_6+a_2b_5+a_3b_4-a_4b_3-a_5b_2+a_6b_1+a_7b_0)(-e_{124}+e_{135}+e_{236}+e_{456}).\qedhere
\end{align*}
% Lisää faktoja:
% \begin{align*}
% We_{12\cdots7}W&=7e_{12\cdots7}-6W\\
% e_{12\cdots7}We_{12\cdots7}&=W\\
% W^2&=7+\frac76e_{12\cdots7}W
% \end{align*}
\end{proof}

% \begin{lem}\label{lem:abIaIb}
% Let $a$ and $b$ be paravectors. Then $[abI]_{0,1}=[aIb]_{0,1}=[Iab]_{0,1}$.
% \end{lem}

We infer that left-, B-, and R-regularity can be studied by considering paravector-spinor valued functions $fI$.

\begin{thm}
Suppose $f\colon\mathbb{R}^8\to\mathbb{R}^8$ is a paravector valued function such that the coordinate functions have partial derivatives.
\begin{enumerate}[(a)]
\item
$f$ is left-regular if and only if
\begin{equation}
[\partial_xfI]_j=0\ \text{for } j=0,1.
\end{equation}
\item
$f$ is B-regular if and only if
\begin{equation}
[\partial_xfI]_j=0\ \text{for } j=0,1,
\quad\text{and}\quad
[\partial_xfW]_1=0.
\end{equation}
% \item
% if $f$ is $R$-regular then
% \begin{equation}\label{eq:rreg12}
% [\partial_x fI]_j=0\ \text{for } j=0,1,2.
% \end{equation}
\end{enumerate}
\end{thm}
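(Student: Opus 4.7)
The plan is to reduce both statements to the algebraic identities in Lemmas \ref{lem:abW} and \ref{lem:16abi}, treating the Cauchy--Riemann operator $\partial_x$ as a paravector whose coefficients are partial-derivative operators. Since the Clifford and octonion products are $\R$-bilinear, identities for paravectors carry over with $a$ replaced by $\partial_x$ and $b$ replaced by a paravector-valued function $f$.

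For part (a), Lemma \ref{lem:16abi} applied formally to the paravector operator $\partial_x$ and the paravector-valued $f$ yields
\[
D_x f = \partial_x \circ f = 16\,[\partial_x f\, I]_{0,1},
\]
where on the right $\partial_x f$ denotes the Clifford product. Since $[\cdot]_{0,1} = [\cdot]_0 + [\cdot]_1$ and these grades are $\R$-linearly independent, $D_x f = 0$ is equivalent to $[\partial_x f I]_0 = 0$ and $[\partial_x f I]_1 = 0$ simultaneously, giving (a).

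For part (b), I will first use the decompositions \eqref{eq:dxfdecomp} and \eqref{eq:fdxdecomp} to observe that
\[
D_x f - f D_x = 2\,\crovec \times \m{f},
\]
so $f$ is B-regular if and only if $D_x f = 0$ and $\crovec \times \m{f} = 0$. By part (a), the first of these is equivalent to $[\partial_x fI]_j=0$ for $j=0,1$, so it only remains to rewrite $\crovec \times \m{f} = 0$ as $[\partial_x f\,W]_1 = 0$. Using \eqref{eq:clparavect}, expand
\[
\partial_x f = (\partial_{x_0}f_0 - \partial_{\m{x}}\cdot\m{f}) + (\partial_{x_0}\m{f} + \partial_{\m{x}}f_0) + \partial_{\m{x}}\wedge\m{f},
\]
a sum of a scalar, a $1$-vector and a $2$-vector. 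Multiplying on the right by the $3$-vector $W$, the scalar part contributes only a $3$-vector, the $1$-vector part contributes $2$- and $4$-vectors, and the $2$-vector part contributes $1$-, $3$- and $5$-vectors. Hence the $1$-vector part of $(\partial_x f)W$ comes entirely from $(\partial_{\m{x}}\wedge\m{f})W$, and Lemma \ref{lem:abW} gives
\[
[\partial_x f\,W]_1 = [(\partial_{\m{x}}\wedge\m{f})W]_1 = -\,\partial_{\m{x}}\times\m{f}.
\]
Combining with (a) yields (b).

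The main obstacle is not computational difficulty but careful grade bookkeeping: one must verify that no other grade part of $\partial_x f$ contributes a $1$-vector after multiplication by $W$. This is a direct consequence of the standard fact that the Clifford product of a $p$-vector and a $q$-vector lives in grades $|p-q|, |p-q|+2, \ldots, p+q$, so it reduces to checking that no pair from $\{0,1,2\}\times\{3\}$ produces grade $1$ except $(2,3)$.
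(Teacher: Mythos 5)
Your proof is correct and follows essentially the same route as the paper: part (a) is Lemma \ref{lem:16abi} applied with $a=\partial_x$ and $b=f$, and part (b) rests on the same identification $[\partial_xfW]_1=[(\partial_{\m{x}}\wedge\m{f})W]_1=-\crovec\times\m{f}$ via grade bookkeeping and Lemma \ref{lem:abW}, combined with the characterization of B-regularity as left regularity plus vanishing rotor (Propositions \ref{prop:mtregular} and \ref{prop:rregular}). The only cosmetic difference is that the paper additionally rewrites $[(\partial_{\m{x}}\wedge\m{f})W]_1$ via \eqref{eq:16idetoka}, which your more direct argument does not need.
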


\begin{proof}
(a) follows using Lemma \ref{lem:16abi}:
\begin{align*}
\cro f=16[\partial_x fI]_{0}+16[\partial_x fI]_{1}.
\end{align*}
(b) We compute, using \eqref{eq:clparavect} and \eqref{eq:16idetoka},
\begin{align*}
[\partial_xfW]_1&=[(\partial_{x_0}f_0-\partial_{\m{x}}\cdot\m{f})W]_1+[(\partial_{x_0}\m{f}+\partial_{\m{x}}f_0)W]_1+[(\partial_{\m{x}}\wedge\m{f})W]_1\\
&=[(\partial_{\m{x}}\wedge\m{f})W]_1\\
&=-16[\partial_xfI]_1+\partial_{x_0}\m{f}+\crovec f_0.
\end{align*}
Since $\crovec\times\m{f}=-[(\partial_{\m{x}}\wedge\m{f})W]_1$ (Lemma \ref{lem:abW}),
the claim now follows from (a) and Propositions \ref{prop:mtregular}--\ref{prop:rregular}.
% (c) Write
% \begin{equation}
% \partial_x fI=\sum_{j=0}^7[\partial_x fI]_j.
% \end{equation}
% If $f$ is R-regular, then $\partial_{x}f=0$, and \eqref{eq:rreg12} holds by the preceding lemma.
\end{proof}

\begin{rem}
If $\partial_xf=0$, then (trivially) $[\partial_xfI]_j=0$ for all $j=0,1,\ldots,7$. The converse does not hold. This follows from the fact that the equation $aI=0$ does not have a unique solution $a=0$ in the Clifford algebra. Hence, paravector spinor valued solutions to the Cauchy-Riemann equations forms a bigger function class, and the class of R-regular solutions is 
\[
\mathcal{M}_R
\subsetneq
\{f : \partial_xfI=0\}
=\{f : [\partial_xfI]_j=0,\ j=0,1,\ldots,7\}
=\{f : [\partial_xfI]_j=0,\ j=0,1,2\}.
\]
Equality of the latter two function classes follows from 
Lemma \ref{lem:abIcl}.
An example showing that the inclusion is strict: if $f=x_2e_1-x_7e_4$, then $\partial_xf=e_4e_7-e_1e_2$, but $[\partial_xfI]_j=0$ for $j=0,1,2$. 
\end{rem}

\section*{Conclusion}
The idea of this paper is to study differences between octonion and Clifford analyses. This leads us to observe the fundamental difference between octonion regular and Clifford monogenic functions. The structure of octonion regular functions is studied by comparing left-, right-, $B$-, and $R$-regular functions. The existence of these classes is a consequence of different algebraic properties of the algebras. In the heart of octonion analysis is the study of the properties of these function classes and their relations, which distinguishes it essentially from Clifford analysis.

\end{document}